\newcommand{\R}{\mathbb{R}}
\newcommand{\Z}{\mathbb{Z}}
\newcommand{\C}{\mathbb{C}}
\newcommand{\Q}{\mathbb{H}}
\newcommand{\K}{\mathbb{K}}
\DeclareMathOperator{\A}{\mathrm{Aff}}
\DeclareMathOperator{\I}{\mathrm{Iso}}
\DeclareMathOperator{\G}{\mathrm{GL}}
\DeclareMathOperator{\ort}{\mathrm{O}}
\newcommand{\Id}{\mathrm{Id}}
\theoremstyle{plain}
\newtheorem{thm}{Theorem}[section]
\newtheorem{lemma}[thm]{Lemma}
\newtheorem{proposition}[thm]{Proposition}
\newtheorem{corollary}[thm]{Corollary}
\theoremstyle{definition}
\newtheorem{definition}[thm]{Definition}
\newtheorem{remark}[thm]{Remark}
\newtheorem*{ack}{Acknowledgements}
\author[Karla Garc\'{\i}a]{Karla Garc\'{\i}a}
\address{Departamento de Matem\'aticas, Facultad de Ciencias, Universidad Nacional Aut\'onoma de M\'exico (UNAM), Ciudad Universitaria, CP 04510 M\'exico. }
\email{ohmu@ciencias.unam.mx}
\thanks{K. Garc\'{\i}a was supported by a postdoctoral fellowship from UNAM-DGAPA}
\author[Oscar Palmas]{Oscar Palmas}
\address{Departamento de Matem\'aticas, Facultad de Ciencias, Universidad Nacional Aut\'onoma de M\'exico (UNAM), Ciudad Universitaria, CP 04510 M\'exico. }
\email{oscar.palmas@ciencias.unam.mx}
\thanks{O. Palmas acknowledges the support of Conacyt-Mexico SNII 16167 and UNAM-Mexico, Project DGAPA-PAPIIT IN101322.}
\title{Moduli spaces of flat Riemannian metrics on $4$-dimensional closed manifolds}
\begin{document}

\maketitle

\markright{ \uppercase{Moduli spaces of flat Riemannian metrics on closed manifolds}}

\begin{abstract}
We give the algebraic and topological description of the moduli spaces of flat metrics for the $4$-dimensional closed flat manifolds with two or three generators in their holonomy, which completes the analysis made in \cite{karla}.    
\end{abstract}

\section{Introduction}\label{sec:intro}

A fundamental question in Riemannian geometry is whether there exists a deformation of a metric on a given manifold $M$ preserving certain curvature conditions. For this, is important to study the space of all such metrics on $M$ and also its moduli space, i.e., its quotient by the full diffeomorphism group of $M$, acting by pulling back metrics. 

Tuschmann and Wiemeler developed a convenient method to study moduli spaces of Riemannian metrics with non-negative Ricci curvature on manifolds diffeomorphic to the product of a closed simply connected manifold with a torus \cite{tuschwiem}. They showed that the problem of understanding the topology of these moduli spaces may be reduced to the study of the moduli space of flat Riemannian metrics on the torus, that is, metrics such that the corresponding sectional curvature vanishes identically.
%In this paper, we study the moduli spaces of flat metrics on $4$-dimensional closed manifolds.  

%One way to think the moduli space of flat metrics is the following. 
The moduli space of flat metrics on a fixed closed manifold $M$, denoted here by $\mathcal{M}_{flat}(M)$, is the quotient of the Teichm\"uller space of flat metrics $\mathcal{T}_{flat}(M)$, diffeomorphic to an Euclidean space, under the action of a discrete mapping class group, denoted by $\mathcal{N}_{\pi}$. For details, see \cite{bettiol}. 
 
A study of $\mathcal{M}_{flat}(M)$ for $3$-dimensional closed manifolds was developed by Kang in \cite{kang}. The first named author studied in \cite{karla} the topology of $\mathcal{M}_{flat}(M)$ for $3$-dimensional closed manifolds, giving also an algebraic description of the moduli spaces for the $4$-dimensional closed flat manifolds with a single generator in their holonomy. In the present paper we complete this analysis, giving
the algebraic description of the moduli spaces of flat metrics for the $4$-dimensional closed manifolds with two or three generators in their holonomy, classified according to the types of their isotypic components. The isotypic components are the direct sum of isomorphic irreducible subspaces of the orthogonal representation of the holonomy of the manifolds on $\R^n$. Details will be given in section \ref{sec:teichmuller}.

Let us introduce some notation. Remember that $\G(n, \Z)$ is the Lie group of all integer matrices with determinant $\pm 1$. Consider a matrix of the form 
$ X= \begin{psmallmatrix}
a & b \\ c & d \end{psmallmatrix} $ and denote
\begin{align*}
\Gamma_0 (2)^t & = \{ X  \in \G(2, \Z) \mid b \equiv 0 \mod 2\},\\  
\Gamma (2) & = \{ X \in \G(2, \Z) \mid c,b \equiv 0 \; \text{and} \; a,d \equiv 1 \mod 2\}.
\end{align*}
We use the notation for flat manifolds given in \cite{lambert}. We can state our main result as follows.

%\textcolor{red}{see if we can use a book reference}.

\begin{thm} \label{modulimoregen}
The moduli spaces of flat metrics on $4$-dimensional closed manifolds with two or more generators in their holonomy are given as follows:
\begin{enumerate}
\item For the case of four different $1$-dimensional $\R$ type isotypic components,
\begin{enumerate}
\item When $M$ is $O^4_{i}$, for $i \in \{ 9,11,12,15,16,17\} $, or $N^4_{45}$, we have \\
$ \mathcal{M}_{flat}(M) \cong (\R^{+})^3 \times [0, \infty  ) $.
\item When $M=O^4_{14}$, we have $ \mathcal{M}_{flat}(M) \cong (\R^{+})^2 \times [0, \infty  )^2 $.
\item When $M$ is $O^4_{10}$, $O^4_{13}$, or 
 $N^4_{i}$, for $i\in \{ 30,\dots,39\}\cup\{46\}$, we have \\
 $ \mathcal{M}_{flat}(M) \cong (\R^{+})^4$.  
\end{enumerate}
\item For the case of two different $1$-dimensional $\R$ type isotypic components, i.e., when $M$ is $O^4_{26}$, $O^4_{27}$ or $N^4_{43}$, we have $ \mathcal{M}_{flat}(M) \cong (\R^{+})^2$.
\item For the case of two different $1$-dimensional $\R$ type isotypic components and one $2$-dimensional $\R$ type isotypic component,
\begin{enumerate}
\item When $M=N^4_{22}$, we have \\ $ \mathcal{M}_{flat}(M) \cong (\text{O}(2)\backslash \text{GL}(2,\mathbb{R}) /\text{GL}(2,\mathbb{Z})) \times (\R^{+})^2 $.
\item When $M$ is $N^4_{i}$, for $i\in \{ 8,9\}\cup\{23,\dots,26 \}$, we have \\ $ \mathcal{M}_{flat}(M) \cong \left( \ort(2) \backslash \G(2, \R) / \Gamma_0(2)^t \right) \times (\R^{+})^2$. 
\item When $M$ is $N^4_{i}$, for $i\in \{ 4\}\cup\{10,\dots,13 \}$, we have \\ $ \mathcal{M}_{flat}(M) \cong \left( \ort(2) \backslash \G(2, \R) / \Gamma (2) \right) \times (\R^{+})^2$.
\item When $M$ is $N^4_{3}$, $N^4_{5}$ or $N^4_{6}$, let $X_1= X_2= \mathbb{H}^2 / \Gamma(2)\cdot \{ \left\langle \begin{psmallmatrix}
0 & -1 \\ 
1  & 0  \end{psmallmatrix}  \right\rangle \}^+$, then \\ 
$ \mathcal{M}_{flat}(M) \cong \left( X_1 \cup_{f_t} X_2 \right) \times (\R^{+})^2 \times [0, \infty ) $, with $t\in [0, \infty)$. When $t\neq 0$, the attaching map $f_t$ is the identity restricted to a path inside $X_1$. When $t=0$, the attaching map is the identity in the whole $X_1$. 
\item When $M=N^4_{7}$, let $X_1= X_2= \mathbb{H}^2 / \Gamma(2)^+$, then \\
$ \mathcal{M}_{flat}(M) \cong \left( X_1 \cup_{f_t} X_2 \right) \times (\R^{+})^2 \times [0, \infty ) $, with $t\in [0, \infty)$. When $t\neq 0$, the attaching map $f_t$ is the identity restricted to a path inside $X_1$. When $t=0$, the attaching map is the identity in the whole $X_1$.  
\item When $M=N^4_{44}$, we have \\ $ \mathcal{M}_{flat}(M) \cong \left( \ort(2) \backslash \G(2, \R) / \Gamma_0(2)^t \right) \times \R^{+} \times [0, \infty )$.
\end{enumerate}
\item For the case of two different $1$-dimensional $\R$ type isotypic components and one $1$-dimensional $\C$ type isotypic component, i.e., when $M$ is $O^4_{i}$, for $i \in \{ 18,\dots,25\} $, or $N^4_{i}$, for $i\in \{ 27,28,29,40,41,42,47 \}$, we have \\ $ \mathcal{M}_{flat}(M) \cong (\R^{+})^3$.  
\end{enumerate}

\end{thm}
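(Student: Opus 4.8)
The plan is to proceed case by case according to the classification of the isotypic components, following the general machinery developed in \cite{karla} and reviewed in section \ref{sec:teichmuller}. The starting point is the description of $\mathcal{M}_{flat}(M)$ as the quotient $\mathcal{T}_{flat}(M)/\mathcal{N}_\pi$, where the Teichm\"uller space decomposes, compatibly with the isotypic decomposition of the holonomy representation on $\R^4$, as a product of the symmetric spaces $\ort(k_j)\backslash\G(k_j,\R)$ attached to each isotypic block of dimension $k_j$ (an $\R^+$ factor when $k_j=1$, a copy of $\ort(2)\backslash\G(2,\R)$ when $k_j=2$, etc.), possibly with additional $[0,\infty)$ factors coming from off-diagonal "coupling" parameters that survive the holonomy constraint. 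Correspondingly, $\mathcal{N}_\pi$ acts block-diagonally, so the first task in each case is to identify explicitly the image of the mapping class group in the relevant product of arithmetic groups.

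For the purely $1$-dimensional $\R$-type cases (items (1), (2) and (4)), each isotypic block contributes an $\R^+$, and the mapping class group can only act by permuting equal blocks, by sign changes (which act trivially on $\R^+ = \ort(1)\backslash\G(1,\R)$), or by scalings that are already quotiented out; the $\C$-type $1$-dimensional block in item (4) similarly contributes a single $\R^+$ because $\ort(2)\backslash\G(2,\R)$ modulo the relevant $\mathrm{U}(1)\times\R^+$-stabilizer collapses to the radial parameter. So for these cases I would: (a) write down the holonomy matrices for each listed manifold from \cite{lambert}; (b) compute the fixed-point set of the holonomy action on the space of flat metrics, reading off how many $\R^+$ and how many $[0,\infty)$ factors appear (the $[0,\infty)$ factors come precisely from isotypic pairs that are abstractly isomorphic as representations but whose coupling is constrained to a half-line by an orientation/sign condition); (c) check that $\mathcal{N}_\pi$ acts trivially on the surviving coordinates, which reduces to a finite computation with the explicit generators. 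The split between (1a), (1b) and (1c) — i.e.\ how many closed half-lines $[0,\infty)$ versus open ones $\R^+$ appear — is exactly this bookkeeping of which couplings are forced to vanish and which may change sign under the group.

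For the cases with a genuine $2$-dimensional $\R$-type block (item (3)), the heart of the matter is the double coset $\ort(2)\backslash\G(2,\R)/\Gamma$, where $\Gamma$ is the arithmetic subgroup of $\G(2,\Z)$ cut out by the congruence conditions that the holonomy imposes on the lattice automorphisms preserving the relevant sublattice. Here I would: identify $\Gamma$ as $\G(2,\Z)$, $\Gamma_0(2)^t$, or $\Gamma(2)$ by examining which integral change-of-basis matrices commute with (or suitably conjugate) the order-two holonomy generator acting on the $2$-dimensional block; this is where the congruence $b\equiv 0$ or $b,c\equiv 0,\ a,d\equiv 1 \pmod 2$ enters. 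For $N^4_3, N^4_5, N^4_6$ and $N^4_7$ there is an additional holonomy element acting on the block by the rotation $\begin{psmallmatrix}0&-1\\1&0\end{psmallmatrix}$, so the relevant quotient is $\mathbb{H}^2/(\Gamma\cdot\langle\text{that rotation}\rangle)^+$, and the extra $[0,\infty)$ parameter $t$ records a coupling between the $2$-dimensional block and a $1$-dimensional block; the two copies $X_1, X_2$ arise because the sign of that coupling is acted on in a way that, for $t\neq 0$, only glues the two sheets along one path, while at $t=0$ the whole sheet is identified. Establishing the precise attaching map $f_t$ — i.e.\ understanding exactly which path inside $X_1$ is fixed — is the step I expect to be the main obstacle, since it requires tracking the residual action of $\mathcal{N}_\pi$ on the boundary stratum $t=0$ and on the $t>0$ strata simultaneously, rather than just naming an arithmetic quotient.

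Finally, I would assemble the pieces: in each case $\mathcal{T}_{flat}(M)$ is a product, $\mathcal{N}_\pi$ acts as a product of the actions analyzed above, so $\mathcal{M}_{flat}(M)$ is the product of the corresponding quotients, giving the stated homeomorphism type. The only genuinely nontrivial verifications are (i) that the congruence subgroup is correctly identified in item (3)(b)–(c), (ii) that no further identifications occur in the $\R^+$ and $[0,\infty)$ factors (a finite check with the generators of $\mathcal{N}_\pi$ from \cite{lambert}), and (iii) the description of $f_t$ in (3)(d)–(e). Everything else is the routine product bookkeeping already set up in \cite{karla}.
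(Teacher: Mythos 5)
Your overall strategy---compute $\mathcal{T}_{flat}(M)$ from the isotypic decomposition, compute $\mathcal{N}_{\pi}$ from the explicit generators in \cite{lambert}, identify the congruence subgroups $\G(2,\Z)$, $\Gamma_0(2)^t$, $\Gamma(2)$ acting on the $2$-dimensional block, and treat $N^4_3$, $N^4_5$, $N^4_6$, $N^4_7$ separately because the action does not split---is the same as the paper's. However, there is a genuine gap in your mechanism for producing the $[0,\infty)$ factors, and as stated it would give wrong answers in case (1).

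You attribute the half-lines to ``off-diagonal coupling parameters that survive the holonomy constraint'' and to ``isotypic pairs that are abstractly isomorphic as representations but whose coupling is constrained to a half-line.'' Neither exists here: by Proposition \ref{teich} the Teichm\"uller space is a clean product of copies of $\R^+$ (and of $\G(2,\R)/\ort(2)$, $\G(1,\C)/\mathrm{U}(1)$) with no coupling terms, precisely because in every listed case the isotypic components are pairwise \emph{non}-isomorphic. The half-lines arise only in the quotient by $\mathcal{N}_{\pi}$, which contains permutation matrices interchanging \emph{non}-isomorphic isotypic components: such a matrix need only normalize $H_{\pi}$, so it may induce an automorphism of $H_{\pi}$ permuting its characters. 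For $O^4_9$, for instance, the swap $e_1\leftrightarrow e_3$ fixes $\tau(\alpha)$ and conjugates $\tau(\beta)$ to $\tau(\alpha\beta)$, hence lies in $\mathcal{N}_{\pi}$ even though the four characters are distinct. Your rule that ``the mapping class group can only act by permuting equal blocks,'' combined with ``check that $\mathcal{N}_{\pi}$ acts trivially on the surviving coordinates,'' would force the answer $(\R^{+})^4$ throughout case (1), contradicting (1a) and (1b). The correct trichotomy is: no permutation in $\mathcal{N}_{\pi}$ gives $(\R^{+})^4$; one transposition gives $(\R^{+})^2\times\bigl((\R^{+})^2/\Z_2\bigr)\cong(\R^{+})^3\times[0,\infty)$; the full $\mathcal{S}_3$ on three coordinates gives $\R^{+}\times\bigl((\R^{+})^3/\mathcal{S}_3\bigr)\cong(\R^{+})^2\times[0,\infty)^2$ via $(x,y,z)\mapsto(x,y-x,z-y)$ on the chamber $x\le y\le z$. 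Deciding which case occurs requires checking whether a candidate permutation $P$ admits a compatible translation part, i.e.\ whether some $(P,x)$ actually normalizes $\pi$ (Propositions \ref{case 1} and \ref{case 2}); this is the step your sketch omits. The same misreading recurs in (3)(d)--(f): the parameter $t$ is not a coupling between the $2$-dimensional and a $1$-dimensional block, but the difference $b-a$ of the two $1$-dimensional parameters, and the glued spaces $X_1\cup_{f_t}X_2$ appear because the single element $B_0\in\mathcal{N}_{\pi}$ simultaneously swaps $a$ and $b$ and acts as an involution on $\mathbb{H}^2/\Gamma_1^+$, so the fiber over $a<b$ is all of $\mathbb{H}^2/\Gamma_1^+$ (two copies of the involution quotient glued along a cut) while over $a=b$ it collapses to one copy. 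Your remaining verification items (congruence-subgroup identification, the finite checks, and the block-diagonality needed in case (4), which the paper supplies via Lemma \ref{separate}) are in line with what the paper actually does.
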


%For the case 3.d) we have that $ E= \ort(2) \backslash \G(2, \R) / \Gamma(2) \rightarrow X$ and for 3.e) $E= \ort(2) \backslash \G(2, \R) / \Gamma_0(4) \rightarrow X$ are double coverings respectively. We consider the space $X$ inside $E$ as a fundamental domain where we have boundary $\partial X$, then the attaching space $X \times \{1\} \cup_{f} X \times \{2\}$ is glued by $f: \partial (X \times \{1\}) \rightarrow \partial (X \times \{2\})$ with $f(x,1)=(x,2)$.

Cases 3(d) and 3(e) are examples of moduli spaces of flat metrics where the action of $\mathcal{N}_{\pi}$ on $\mathcal{T}_{flat}(M)$ does not split as a product. Here the moduli space is homeomorphic to a product where one of the factors is a space attached to itself by  a curve.

For the following corollary we use the techniques given in \cite{karla} in order to study the topology of the moduli spaces of flat metrics.

\begin{corollary} \label{topmoregen}
The non-contractible moduli spaces of flat metrics on $4$-dimensional closed manifolds with two or more generators in their holonomy are given in items (3)(b)-(f) of Theorem \ref{modulimoregen}. More precisely:

\renewcommand{\labelenumi}{(\alph{enumi})}
\begin{enumerate}
\setcounter{enumi}{1}
\item When $M$ is $N^4_{i}$, for $i\in \{ 8,9\}\cup\{23,\dots,26 \}$, we have $ \mathcal{M}_{flat}(M) \cong \mathbb{S}^1 \times (\R^{+})^4$. 
\item When $M$ is $N^4_{i}$, for $i\in \{ 4\}\cup\{10,\dots,13 \}$, we have \\ $ \mathcal{M}_{flat}(M) \cong 3\text{-punctured sphere} \times (\R^{+})^3$.
\item When $M$ is $N^4_{3}$, $N^4_{5}$ or $N^4_{6}$, we have $ \mathcal{M}_{flat}(M) \cong \left( X_1 \cup_{f_t} X_2 \right) \times (\R^{+})^2 \times [0, \infty ) $, with $X_1=X_2 \cong \mathbb{S}^1 \times \R $. 
\item When $M=N^4_{7}$, we have $ \mathcal{M}_{flat}(M) \cong \left( X_1 \cup_{f_t} X_2 \right) \times (\R^{+})^2 \times [0, \infty ) $ with \\ 
$X_1= X_2\cong 3$-punctured sphere.   
\item When $M=N^4_{44}$, we have $ \mathcal{M}_{flat}(M) \cong \mathbb{S}^1 \times (\R^{+})^3 \times [0, \infty )$.
\end{enumerate}

All other moduli spaces from the list in Theorem  \ref{modulimoregen} are contractible.  
\end{corollary}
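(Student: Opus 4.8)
The plan is to read the homeomorphism type of each moduli space off the algebraic models supplied by Theorem~\ref{modulimoregen}, following the topological techniques of~\cite{karla}. First I would dispose of the contractible cases: each factor of the shape $(\R^{+})^{k}$ or $[0,\infty)^{k}$ is contractible, and a finite product of contractible spaces is contractible, so the moduli spaces in items (1), (2) and (4) are contractible. In item (3)(a) the only factor that is not patently contractible is $\ort(2)\backslash\G(2,\R)/\G(2,\Z)$, which is the moduli space of flat metrics on $T^{2}$; via the polar decomposition it is the cone of positive definite symmetric $2\times2$ matrices modulo the action $G\mapsto A^{t}GA$, and since this action preserves $\det$ it splits off an $\R^{+}$ factor, the remaining quotient being a fundamental region for (the image of) $\G(2,\Z)$ acting on $\mathbb{H}^{2}$, which deformation retracts to a point. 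Hence (3)(a) is contractible too.

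For the non-contractible cases I would analyze the remaining double cosets in the same spirit. The factors $\ort(2)\backslash\G(2,\R)/\Gamma_{0}(2)^{t}$ and $\ort(2)\backslash\G(2,\R)/\Gamma(2)$ each split, after separating off the $\det$-coordinate, into an $\R^{+}$ times a $2$-orbifold of the form $\mathbb{H}^{2}/\overline{\Gamma}$; I would then invoke the classical description of the modular curves $X_{0}(2)$ and $X(2)$ (both of genus $0$, with two and three cusps respectively) to conclude that these quotients are, as topological spaces, a cylinder $\mathbb{S}^{1}\times\R$ and a thrice-punctured sphere. Multiplying out the contractible factors yields $\mathcal{M}_{flat}(M)\cong\mathbb{S}^{1}\times(\R^{+})^{4}$ in item (3)(b), $\mathcal{M}_{flat}(M)\cong(\text{thrice-punctured sphere})\times(\R^{+})^{3}$ in item (3)(c) and $\mathcal{M}_{flat}(M)\cong\mathbb{S}^{1}\times(\R^{+})^{3}\times[0,\infty)$ in item (3)(f). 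For items (3)(d) and (3)(e) I would first identify the building blocks: $\mathbb{H}^{2}/\Gamma(2)\cdot\{\langle\begin{psmallmatrix}0&-1\\1&0\end{psmallmatrix}\rangle\}^{+}$ is the quotient of $\mathbb{H}^{2}$ by a group of index $3$ in the modular group conjugate to the one defining $X_{0}(2)$, hence $X_{1}=X_{2}\cong\mathbb{S}^{1}\times\R$, whereas $\mathbb{H}^{2}/\Gamma(2)^{+}$ is a thrice-punctured sphere; the attaching data $X_{1}\cup_{f_{t}}X_{2}$ is then inherited verbatim from Theorem~\ref{modulimoregen}, and since the slice $t=0$ recovers $X_{1}$ alone --- which already has infinite cyclic (resp.\ free) fundamental group --- the total space is not contractible. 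Finally, non-contractibility throughout (3)(b)--(f) follows because a nontrivial class in $\pi_{1}$ (equivalently $H_{1}$) of the cylinder or punctured-sphere factor survives both the product with the contractible factors and the gluing, whereas in all the remaining items every factor is contractible.

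I expect the main obstacle to be pinning down the exact homeomorphism type of the double cosets $\ort(2)\backslash\G(2,\R)/\Gamma$ for $\Gamma\in\{\Gamma_{0}(2)^{t},\Gamma(2)\}$, and of the related quotients in (3)(d)--(e): one must track precisely how the orientation-reversing elements of $\Gamma\subset\G(2,\Z)$ act, identify the resulting $2$-orbifold with (a conjugate of) $X_{0}(2)$ or $X(2)$, and verify that the outcome is as claimed rather than an accidentally contractible fold --- this is exactly where the techniques of~\cite{karla} enter. A secondary difficulty is the non-split, one-parameter family of gluings $X_{1}\cup_{f_{t}}X_{2}$ in (3)(d)--(e), where one has to check that no value of $t\in[0,\infty)$ changes the homotopy type so as to collapse the surviving loop. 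Once these identifications are in place, the rest reduces to the product-of-contractibles observation.
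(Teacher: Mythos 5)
Your proposal is correct and follows essentially the same route as the paper: the proof there is a one-line reduction to the homeomorphisms of Section~\ref{sec:subgroups}, namely $\ort(2)\backslash\G(2,\R)/\Gamma\cong\R^+\times\mathbb{H}^2/\Gamma^+$ together with the identification of $\mathbb{H}^2/\Gamma_0(2)^{t+}$, $\mathbb{H}^2/\Gamma(2)\cdot\langle Y\rangle^+$ and $\mathbb{H}^2/\Gamma(2)^+$ as a cylinder, a cylinder, and a thrice-punctured sphere, plus contractibility of the remaining product factors. The only (immaterial) difference is that the paper establishes these surface identifications by computing explicit fundamental domains following \cite{karla}, while you invoke the classical genus-and-cusp data of the modular curves $X_0(2)$ and $X(2)$ and the conjugacy of $\Gamma(2)\cdot\langle Y\rangle$ with $\Gamma_0(2)$.
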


This paper is organized as follows. In section \ref{sec:prelim} we give some preliminary information about closed flat manifolds and Bieberbach groups. In section \ref{sec:teichmuller} we describe the Teichm\"uller space of flat metrics for the $4$-dimensional closed manifolds with two or more generators in their holonomy. In sections \ref{sec:isotypicreal} and \ref{sec:isotypic1complex} we study the matrix part of the normalizer of the respective Bieberbach group of each $4$-dimensional closed manifold. In section \ref{sec:subgroups} we analyze certain types of double quotients, which are used to study the topology of the moduli spaces of flat metrics. Finally, in section \ref{sec:moduli} we prove our main  results.

\begin{ack}
We thank Motiejus Valiunas for pointing out a mistake, which is explained in the remark of section \ref{sec:teichmuller}, and for useful conversations.
\end{ack}

\section{Preliminaries}\label{sec:prelim}

A {\bf flat manifold} is a Riemannian manifold which admits a metric of zero sectional curvature, called a {\bf flat metric}. These manifolds are basically described by their fundamental group, which turns out to be a Bieberbach group.

\begin{definition}
A {\bf Bieberbach group $\mathbf{\pi}$} is a discrete subgroup of the isometries of $\R^n$, that is torsion-free and such that $\R^n/\pi$ is compact. 
\end{definition}

We have that $(\R^n/\pi, \sigma)$, with $\pi$ a Bieberbach group and $\sigma$ the metric induced from the usual metric of $\mathbb{R}^n$, is a closed flat manifold. Conversely, let $M$ be a closed manifold with a flat metric $g$, then its universal cover with the metric induced from $g$ is isometric to $\mathbb{R}^n$ with the usual metric. In other words, $\R^n$ with the usual metric is a Riemannian covering of $(M,g)$ and we consider its group of deck transformations, denoted by $\pi$. Then $(M,g)$ is isometric to $(\R^n/\pi, \sigma)$, where $\pi$ is a Bieberbach group. Therefore a closed flat manifold is represented by its Bieberbach group and the Bieberbach theorems describe important properties about them. One of these properties is that two closed flat manifolds with isomorphic fundamental groups are affinely equivalent. For more details, see \cite{charlap} or \cite{wolf}. 

%Since we are working with affine transformations, we give some notation.
The group of affine transformations of $\R^n$, denoted by $\A(n)$, has the structure of a semidirect product $\A(n) = \G(n, \R) \ltimes \R^n $. The group of isometries of $\R^n$ denoted by $\I(n)$, also has the structure of a semidirect product, $\I(n)= \ort(n)\ltimes \mathbb{R}^n $. 

Consider the projection homomorphism
$$ 
\begin{array}{cccc}
\tau\colon & \A(n) & \rightarrow & \G(n, \R), \\ 
      &   (A,v)       & \mapsto     &    A .  
\end{array}      $$

\begin{definition}
Let $\pi$ be a Bieberbach group. The {\bf holonomy} of $\pi$ is the subgroup of $\G(n, \R)$ given by $H_{\pi}\coloneqq  \tau (\pi)$. 
\end{definition}

Thanks to the Bieberbach theorems, the space of flat metrics on a fixed closed manifold $M$ can be described in terms of their Bieberbach groups as follows: For a flat metric $g$ on $M$ we get a Bieberbach group $\pi$ such that $M=\R^n/\pi$, and if we consider another flat metric $g'$ on $M$ we get another Bieberbach group $\pi'$, where $\pi' = \gamma \pi \gamma^{-1} $ for some $\gamma \in \A(n)$. Then the space of flat metrics is $\{ \gamma \in \A(n) \mid  \gamma \pi \gamma^{-1} \subset \I(n) \}$. To define the Teichm\"uller space of flat metrics we just need to consider the matrix part of the previous space.

\begin{definition}
The {\bf Teichm\"uller space of flat metrics} of the flat manifold $M=\R^n/\pi$ is the orbit space $\ort(n) \backslash \{ A \in \G(n, \R) \mid A H_{\pi} A^{-1} \subset \ort(n) \}$ of the left $\ort(n)$-action by matrix multiplication.
\end{definition}

Now, we want to distinguish isometry classes of flat metrics. If we have two metrics $g$ and $g'$ on $M$, where $\pi '= (A,v) \pi (A,v)^{-1}$, we know that they are isometric if and only if $A=BC$, where $B\in \ort(n)$ and $C\in  \tau ( \mathrm{N}_{\A(n)} (\pi))$, $\mathrm{N}_{\A(n)} (\pi)$ being the normalizer of $\pi$ in $\A(n)$, see \cite{wolf2}. Since the moduli space $\mathcal{M}_{flat}(M)$ of a closed flat manifold $M= \R^n/{\pi}$ is defined as the set of isometry classes of flat metrics on $M$, then we obtain the following description.    

\begin{proposition} [\protect{\cite{bettiol}}, {Proposition 4.3}] \label{moduli}
\begin{equation} \label{modulieq}
\mathcal{M}_{flat}(M) = \mathcal{T}_{flat}(M)/ \mathcal{N}_{\pi},  \quad \text{where} \quad \mathcal{N}_{\pi} \coloneqq \tau ( \mathrm{N}_{\A(n)} (\pi)) . 
\end{equation}
\end{proposition}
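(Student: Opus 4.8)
The plan is to exhibit an explicit bijection between $\mathcal{T}_{flat}(M)/\mathcal{N}_{\pi}$ and the set of isometry classes of flat metrics on $M$, and then to upgrade it to a homeomorphism for the quotient topologies. First I would recall the parametrization of flat metrics described just before the definition of $\mathcal{T}_{flat}(M)$: having fixed the presentation $M=\R^n/\pi$, any flat metric $g$ on $M$ determines, via its developing map, a Bieberbach group $\pi_g$ with $(M,g)$ isometric to $(\R^n/\pi_g,\sigma)$; since $\pi_g\cong\pi$, the Bieberbach rigidity theorem gives $(A,v)\in\A(n)$ with $\pi_g=(A,v)\pi(A,v)^{-1}\subset\I(n)$, and passing to linear parts forces $A H_{\pi} A^{-1}\subset\ort(n)$. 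Thus $g$ yields a point of $\{A\in\G(n,\R)\mid A H_{\pi}A^{-1}\subset\ort(n)\}$, and conversely every such $A$ produces, by pulling back $\sigma$ along the affine chart, a flat metric $g_A$ on $M$; so the assignment $A\mapsto g_A$ is surjective onto all flat metrics on $M$.

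Next I would identify the fibers. One checks $g_{BA}=g_A$ for $B\in\ort(n)$ (an isometry of $\R^n$ descends to an isometry of $M$), so the assignment factors through a surjection $\Phi$ from $\mathcal{T}_{flat}(M)=\ort(n)\backslash\{A\mid A H_{\pi}A^{-1}\subset\ort(n)\}$ onto the set of isometry classes of flat metrics. The criterion of \cite{wolf2} recalled above, applied to the two conjugates $(A,v)\pi(A,v)^{-1}$ and $(A',v')\pi(A',v')^{-1}$, says precisely that $g_A$ and $g_{A'}$ are isometric if and only if $\ort(n)A$ and $\ort(n)A'$ lie in the same orbit of the right action of $\mathcal{N}_{\pi}=\tau(\mathrm{N}_{\A(n)}(\pi))$. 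That this is a well-defined right action on $\{A\mid A H_{\pi}A^{-1}\subset\ort(n)\}$ follows because, for $(C,w)\in\mathrm{N}_{\A(n)}(\pi)$, one has $C^{-1}H_{\pi}C=\tau((C,w)^{-1}\pi(C,w))=H_{\pi}$, whence $(AC)H_{\pi}(AC)^{-1}=A H_{\pi}A^{-1}\subset\ort(n)$; and it commutes with the left $\ort(n)$-action used to form $\mathcal{T}_{flat}(M)$. Hence $\Phi$ descends to a bijection $\overline{\Phi}\colon\mathcal{T}_{flat}(M)/\mathcal{N}_{\pi}\to\mathcal{M}_{flat}(M)$.

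Finally I would check that $\overline{\Phi}$ is a homeomorphism. The set $\{A\mid A H_{\pi}A^{-1}\subset\ort(n)\}$ inherits the subspace topology from $\G(n,\R)$, its successive quotients carry the quotient topologies, and $\mathcal{M}_{flat}(M)$ carries the topology of flat metrics modulo isometry; the map $A\mapsto g_A$ is continuous, and the two groups $\ort(n)$ and $\mathcal{N}_{\pi}$ (the latter discrete) act by homeomorphisms, so $\overline{\Phi}$ and its inverse are continuous, yielding \eqref{modulieq}. The main obstacle is the second step: one has to verify that the \emph{only} ambiguity in the assignment $g\mapsto A$ is the left $\ort(n)$-factor, and that the isometry criterion of \cite{wolf2} matches exactly the right $\mathcal{N}_{\pi}$-orbit relation — that is, that no extra identifications are concealed in the choice of developing map or base point. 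The remaining manipulations are formal computations with the semidirect product structures of $\A(n)$ and $\I(n)$.
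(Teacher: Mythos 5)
Your argument is correct and follows essentially the same route the paper takes: it is precisely the elaboration of the discussion preceding the proposition (parametrizing flat metrics by affine conjugators of $\pi$, quotienting by $\ort(n)$ on the left, and invoking the isometry criterion of \cite{wolf2} to identify isometry classes with right $\mathcal{N}_{\pi}$-orbits), which is the content of Proposition 4.3 of \cite{bettiol} that the paper cites without reproving.
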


$\mathcal{N}_{\pi}$ is discrete and the action is not necessarily free, so  $\mathcal{M}_{flat}(M)$ may have singularities. For more information about $\mathcal{M}_{flat}(M) $ see \cite{karla}. 

In order to describe in a more precise way $\mathcal{M}_{flat}(M)$ we use the list of the affine equivalence classes of closed flat manifolds, which is in bijective correspondence with the list of the affine conjugacy classes of Bieberbach groups in $\I(n)$. This is known for low values of the dimension of the manifolds. The $4$-dimensional case was studied by several authors such as Calabi \cite{calabi}, Levine \cite{levine}, Brown \cite{brown}, and Hillman \cite{hillman}, but we found the classification given by Lambert in  \cite{lambert} more suitable for geometers to use. The notation and representation we use from Lambert's list will be given in sections \ref{sec:isotypicreal} and \ref{sec:isotypic1complex}.    

%\textcolor{blue}{Should I say something about the other equivalences of the moduli space of flat metrics?? in here I could add the proof of the homeomorphism in case I don't have to do it in the previous paper}

\section{The Teichm\"uller space of flat metrics}\label{sec:teichmuller}

%Since \eqref{modulieq} gives the description of the moduli space of flat metrics
Since we are using the description of the moduli space of flat metrics given in \eqref{modulieq}, we start by analyzing the Teichm\"uller space of flat metrics. We apply Theorem B of  \cite{bettiol}, where it is used the decomposition of $\R^n$ into $H_{\pi}$-isotypic components. 

Recall that a nonzero invariant subspace $V$ of $\R^n$ is {\bf irreducible} if it does not contain any proper invariant subspace, or, equivalently, if every nonzero element of the vector space End$_{H_{\pi}}(V)$ of linear equivariant endomorphisms of $V$ is an isomorphism. In this situation, End$_{H_{\pi}}(V)$ is an associative real division algebra and hence isomorphic to one of $\R$, $\C$, or  $\Q$. The irreducible subspace $V$ is called of {\bf real}, {\bf complex}, or {\bf quaternionic} type, according to the isomorphism type of End$_{H_{\pi}}(V)$.

Consider the decomposition of the orthogonal $H_{\pi}$-representation into irreducible subspaces, $\R^n = V_{1,1} \oplus \cdots \oplus V_{1,m_1} \oplus \cdots \oplus V_{l,1} \oplus \cdots \oplus V_{l,m_l}$, where each $V_{i,j}$ is irreducible and $V_{i,j}$ is isomorphic to $V_{i',j'}$ if and only if $i=i'$, so that $W_i= \oplus_{j=1}^{m_i} V_{i,j}$, $i=1, \dots,l$ are the so-called {\bf isotypic components}. 

\begin{proposition} \label{teich}
The Teichm\"uller spaces $\mathcal{T}_{flat}(M)$ for the $4$-dimensional closed flat manifolds with two or more generators in their holonomy are the following:
\begin{enumerate}
\item When $M$ has four different $1$-dimensional $\R$ type isotypic components. This is the case of $O^4_{i}$, with $i \in \{ 9,\dots,17\}$, and $N^4_{i}$, with $i \in \{ 30,\dots,39\}\cup\{45, 46\}$. Then
$$ \mathcal{T}_{flat}(M) \cong \prod_{i=1}^4 \frac{\G(1, \R)}{\ort(1)} \cong (\R^+)^4.$$
\item When $M$ has two different $1$-dimensional $\R$ type isotypic components. This is the case of $O^4_{26}$, $O^4_{27}$ and $N^4_{43}$. Then
$$ \mathcal{T}_{flat}(M) \cong \prod_{i=1}^2 \frac{\G(1, \R)}{\ort(1)} \cong (\R^+)^2.$$
\item When $M$ has two different $1$-dimensional $\R$ type isotypic components and one $2$-dimensional $\R$ type isotypic component. This is the case of $N^4_{i}$, with $i \in \{ 3,\dots,13\}\cup\{22,\dots,26\}\cup\{ 44\}$. Then
$$ \mathcal{T}_{flat}(M) \cong \frac{\G(2, \R)}{\ort(2)} \times \prod_{i=1}^2 \frac{\G(1, \R)}{\ort(1)} \cong \frac{\G(2, \R)}{\ort(2)} \times (\R^+)^2.$$
\item When $M$ has two different $1$-dimensional $\R$ type isotypic components and one $1$-dimensional $\C$ type isotypic component. This is the case of $O^4_{i}$, with $i \in \{ 18,\dots,25\}$, and $N^4_{i}$, with $i \in \{ 27,28,29,40,41,42,47\}$. Then
$$ \mathcal{T}_{flat}(M) \cong  \prod_{i=1}^2 \frac{\G(1, \R)}{\ort(1)} \times \frac{\G(1, \C)}{\mathrm{U}(1)}  \cong  (\R^+)^2 \times \frac{\G(1, \C)}{\mathrm{U}(1)} .$$
  
\end{enumerate}
\end{proposition}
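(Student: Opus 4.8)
The strategy is to apply Theorem B of \cite{bettiol}, which identifies $\mathcal{T}_{flat}(M)$ with a product of factors indexed by the $H_\pi$-isotypic components of $\R^n$, where an isotypic component $W_i$ of real type contributes $\G(d_i,\R)/\ort(d_i)$ (with $d_i$ the dimension of an irreducible constituent) raised to an appropriate multiplicity bookkeeping factor, a component of complex type contributes $\G(k_i,\C)/\mathrm{U}(k_i)$, and a quaternionic component contributes the analogous symplectic quotient. Since all four cases in the statement involve only $1$- or $2$-dimensional irreducible pieces with multiplicity one inside each isotypic component, the formula collapses to exactly the products written. Thus the real content is purely bookkeeping: for each manifold in Lambert's list \cite{lambert}, determine the decomposition of the orthogonal holonomy representation on $\R^4$ into irreducibles, group them into isotypic components, and record the type ($\R$, $\C$, or $\Q$) and dimension of each.

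Concretely, I would proceed manifold-family by manifold-family. First, for the $O^4_i$ and $N^4_i$ listed in (1), exhibit the holonomy matrices from \cite{lambert} and check that $\R^4$ splits as $L_1\oplus L_2\oplus L_3\oplus L_4$ into four pairwise non-isomorphic $1$-dimensional real representations; each $L_j$ is of real type because $\mathrm{End}_{H_\pi}(L_j)=\R$, and non-isomorphic because the holonomy acts on them by distinct sign characters. Applying Theorem B gives $\prod_{i=1}^4 \G(1,\R)/\ort(1)\cong(\R^+)^4$. For (2), the same computation shows $\R^4$ splits into two $1$-dimensional real isotypic components (here each irreducible appears with multiplicity two, or the non-trivial part is otherwise smaller), yielding $(\R^+)^2$. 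For (3), verify that the holonomy representation decomposes as $L_1\oplus L_2\oplus P$, where $L_1,L_2$ are non-isomorphic $1$-dimensional real types and $P$ is a $2$-dimensional real-type irreducible (real type meaning $\mathrm{End}_{H_\pi}(P)=\R$, e.g. the holonomy contains a reflection or a matrix with real eigenvalues acting irreducibly on $P$), giving the factor $\G(2,\R)/\ort(2)$ alongside $(\R^+)^2$. For (4), check that the $2$-dimensional piece is instead of complex type — the holonomy restricted to it is a rotation by an angle that is an irrational multiple of $\pi$, or more precisely $\mathrm{End}_{H_\pi}$ of that piece is $\C$ — so it contributes $\G(1,\C)/\mathrm{U}(1)$ instead.

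The main obstacle, and the bulk of the work, is the last step: correctly reading off the holonomy representations from \cite{lambert} for every one of the roughly sixty manifolds named, and in each case distinguishing real-type from complex-type $2$-dimensional components (cases (3) versus (4)) and correctly counting how many distinct $1$-dimensional characters occur (cases (1) versus (2)). This is where errors are easy to make, so I would organize it as a table: holonomy generators, the character decomposition of $\R^4$, the $\mathrm{End}_{H_\pi}$ of each isotypic component, and the resulting Teichm\"uller factor. Once the table is complete, invoking Theorem B of \cite{bettiol} uniformly across all rows finishes the proof; the deferral of the detailed holonomy data to sections \ref{sec:isotypicreal} and \ref{sec:isotypic1complex} (as the excerpt promises) is exactly the right way to keep this argument readable.
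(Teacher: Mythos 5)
Your overall strategy coincides with the paper's (which simply reads the isotypic data off Lambert's tables and invokes Theorem B of \cite{bettiol}), but your statement of Theorem B contains an error that would derail cases (2) and (3). The factor contributed by an isotypic component $W_i=V_i^{\oplus m_i}$ is $\G(m_i,\K_i)/\ort(m_i,\K_i)$, where $m_i$ is the \emph{multiplicity} of the irreducible $V_i$ and $\K_i=\mathrm{End}_{H_\pi}(V_i)$; the dimension of $V_i$ does not enter. You instead write the factor as $\G(d_i,\R)/\ort(d_i)$ with $d_i$ ``the dimension of an irreducible constituent,'' and then claim all cases have multiplicity one. Both halves of that are false here. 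In case (3) the $2$-dimensional real-type isotypic component is not a $2$-dimensional irreducible $P$ with multiplicity one, as you assert: it is the \emph{trivial} $1$-dimensional character occurring with multiplicity two (e.g.\ for $N^4_3$ the generators act as $\mathrm{diag}(1,1,1,-1)$ and $\mathrm{diag}(1,1,-1,-1)$, so the first two coordinates carry two copies of the same character). Under your reading, a $2$-dimensional irreducible of real type with multiplicity one would contribute $\G(1,\R)/\ort(1)\cong\R^+$, not the factor $\G(2,\R)/\ort(2)$ that actually appears; it is precisely the multiplicity $2$ that produces $\G(2,\R)/\ort(2)$.

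Case (2) exposes the same confusion from the other side: the manifolds $O^4_{26}$, $O^4_{27}$, $N^4_{43}$ have holonomy $\mathrm{A}_4$ or $\mathrm{A}_4\times\Z_2$, and $\R^4$ splits as a trivial $1$-dimensional representation plus the $3$-dimensional irreducible standard representation of $\mathrm{A}_4$, each with multiplicity one and each of real type. Your parenthetical guess (``each irreducible appears with multiplicity two, or the non-trivial part is otherwise smaller'') is not what happens; the $(\R^+)^2$ arises because both multiplicities equal one, so each component contributes $\G(1,\R)/\ort(1)$ regardless of the $3$-dimensional size of the second component. Once you correct the statement of Theorem B to depend on multiplicities rather than dimensions, the rest of your bookkeeping plan (tabulating the character decomposition and $\mathrm{End}_{H_\pi}$ of each component from Lambert's list, which the paper defers to the tables in its later sections) is exactly the paper's argument.
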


\begin{proof}
We identify the isotypic components of the representations of each Bieberbach group $\pi$ from the list in \cite{lambert}, which are given in several tables later. Then, use Theorem B of  \cite{bettiol} to describe the Teichm\"uller space.
\end{proof}

\begin{remark}
Theorem B of  \cite{bettiol} splits the Teichm\"uller space nicely into products. % We had the attempt to also 
One may try to split the moduli space of flat metrics into products
\[
\prod_{i=1}^l \ort(m_i, \K_i) \backslash \G(m_i, \K_i) / \Gamma_i
\]
for each different isotypic component, but this is not possible. For example, take the $3$-dimensional closed flat manifold $G_6$, whose Teichm\"uller space is $\mathcal{T}_{flat}(G_6)= (\ort(1) \backslash \G(1, \R) )^3 \approx (\R^+)^3$. The action of $\mathcal{N}_{\pi}$ does not split into each factor; in fact, it is permutation of the coordinates in $(\R^+)^3$.
\end{remark}

\section{Manifolds whose isotypic components are all of real type}\label{sec:isotypicreal}

Here we describe the matrix part of the normalizer for the $4$-dimensional closed flat manifolds whose isotypic components are all of real type. They are the manifolds with holonomy $\Z_2^k$, two manifolds with holonomy $\mathrm{A}_4$ and one manifold with holonomy $\mathrm{A}_4 \times \Z_2$, where $\mathrm{A}_4$ is the alternating group on four letters.

The following lemmas are useful to analyze the matrix part of the normalizer.

\begin{lemma} \label{diag}
Let $\pi$ a Bieberbach group with holonomy $\Z_2^k$ for some $ k \in \mathbb{N}$. Then the matrices generating the holonomy may 
%can always 
be represented by diagonal matrices with entries $\pm 1$. 
\end{lemma}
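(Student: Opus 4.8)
The plan is to diagonalize the holonomy representation of $\Z_2^k$ over $\R$ using the standard representation theory of finite abelian groups, and then observe that the resulting simultaneous eigenvalues must be $\pm 1$ because every element of $\Z_2^k$ has order dividing $2$. First I would recall that $H_\pi \cong \Z_2^k$ is a finite abelian group whose generators $A_1,\dots,A_k$ are commuting matrices in $\G(n,\R)$ satisfying $A_i^2 = \Id$. Each $A_i$ is therefore diagonalizable (its minimal polynomial divides $x^2-1$, which has distinct real roots $\pm 1$), and since the $A_i$ commute pairwise, they are simultaneously diagonalizable over $\R$: there is a single $P \in \G(n,\R)$ with $P A_i P^{-1}$ diagonal for all $i$. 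The diagonal entries are eigenvalues of an involution, hence each is $+1$ or $-1$.

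The one point that needs a little care is that a change of basis by $P$ corresponds, at the level of the Bieberbach group, to conjugating $\pi$ by the affine map $(P, 0) \in \A(n)$; this replaces $\pi$ by an affinely equivalent Bieberbach group whose holonomy is generated by the diagonalized matrices. By the Bieberbach rigidity theorem quoted in Section~\ref{sec:prelim} (affinely equivalent Bieberbach groups represent the same flat manifold), this is harmless: we may assume from the outset that the generators of $H_\pi$ are diagonal with $\pm 1$ entries. I would also note that since $H_\pi$ acts orthogonally (the holonomy sits inside $\ort(n)$ after the conjugation bringing $\pi$ into $\I(n)$), one can in fact take $P \in \ort(n)$, by choosing the simultaneous eigenbasis to be orthonormal with respect to the standard inner product; this keeps us inside the isometry group, although for the statement as given only the affine conclusion is needed.

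The step I expect to be the only mild obstacle is making the simultaneous-diagonalization argument clean in the case of an orthogonal (rather than merely real) representation, and spelling out that the common eigenspace decomposition $\R^n = \bigoplus_\chi V_\chi$ indexed by characters $\chi \colon \Z_2^k \to \{\pm 1\}$ is orthogonal — but this is routine since distinct eigenspaces of a symmetric (here, orthogonal and symmetric, as $A_i^t = A_i^{-1} = A_i$) matrix are automatically orthogonal. Everything else is immediate from linear algebra, so the proof is short.
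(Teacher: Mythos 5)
Your proposal is correct and follows essentially the same route as the paper: both rest on the fact that the generators are commuting involutions, hence simultaneously diagonalizable over $\R$ with eigenvalues $\pm 1$ (the paper just phrases this as an iterative block-diagonalization rather than invoking simultaneous diagonalizability outright). Your added remarks on implementing the change of basis by conjugation with $(P,0)\in\A(n)$ and on choosing $P\in\ort(n)$ are correct refinements but not a different argument.
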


\begin{proof} %\textcolor{red}{buscar referencia}
Since the holonomy is the product of $k$ copies of $\Z_2$, the matrices $A \in H_{\pi}$ are such that $A^2 = \Id$ and we can diagonalize $A$ with eigenvalues $\pm 1$. Since for any other $B \in H_{\pi}$, $AB$ is again a reflection, then $AB=BA$ and $B$ will be a diagonal matrix for the component that was diagonalized for $A$. If we still have some $C \in H_{\pi}$ which is not diagonal, we repeat the same process.         
\end{proof}

\begin{lemma} \label{notinv}
Let $A$ and $B$ diagonal matrices with entries $\pm 1$. If $A$ has $k$ entries $-1$ and $B$ has $k+r$ entries $-1$, with $r \geq 1$, then there is no matrix in $X \in \G( n , \R)$ such that $XA=BX$.  
\end{lemma}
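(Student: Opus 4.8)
The plan is to argue by linear algebra on the eigenspaces of $A$ and $B$. Suppose, for contradiction, that there exists $X \in \G(n,\R)$ with $XA = BX$. Since $A$ and $B$ are diagonal with entries $\pm 1$, each has eigenvalues $+1$ and $-1$; write $E_+^A, E_-^A$ for the $(\pm 1)$-eigenspaces of $A$ and $E_+^B, E_-^B$ for those of $B$. The hypothesis says $\dim E_-^A = k$, hence $\dim E_+^A = n-k$, while $\dim E_-^B = k+r$ and $\dim E_+^B = n-k-r$. The relation $XA = BX$ means $X$ intertwines $A$ and $B$, so $X$ maps each eigenspace of $A$ into the corresponding eigenspace of $B$: if $Av = v$ then $B(Xv) = XAv = Xv$, so $X(E_+^A) \subseteq E_+^B$, and similarly $X(E_-^A) \subseteq E_-^B$.

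The contradiction then comes from a dimension count. Since $X$ is invertible, its restriction to any subspace is injective, so $\dim X(E_+^A) = \dim E_+^A = n-k$. But $X(E_+^A) \subseteq E_+^B$ forces $n-k = \dim X(E_+^A) \le \dim E_+^B = n-k-r$, i.e. $r \le 0$, contradicting $r \ge 1$. (Equivalently, one can run the same argument on the $(-1)$-eigenspaces: $k = \dim E_-^A \le \dim X^{-1}(E_-^B)$ is automatic, but using $X^{-1}$, which intertwines $B$ and $A$, gives $X^{-1}(E_-^B) \subseteq E_-^A$ and hence $k+r \le k$.) Either way we obtain the desired contradiction, so no such $X$ exists.

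I do not expect any serious obstacle here: the only point requiring a word of care is that $A$ and $B$ both genuinely have a $(+1)$-eigenspace (respectively $(-1)$-eigenspace) of positive dimension, so that the inclusion being violated is a true inclusion of subspaces of different dimensions; this is immediate since $0 \le k < k+r \le n$ forces $0 \le n-k-r < n-k \le n$. The statement and its proof do not use the Bieberbach-group context at all — it is a purely linear-algebraic fact about conjugating (more precisely, intertwining) two involutions with different numbers of $-1$ eigenvalues — so no results from earlier in the excerpt are needed beyond elementary facts about eigenspace decompositions of diagonalizable matrices.
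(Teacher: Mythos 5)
Your proof is correct. The paper disposes of this in one line by observing that $XA=BX$ with $X$ invertible makes $A$ and $B$ similar, so they share a characteristic polynomial and in particular the same multiplicity of the eigenvalue $-1$, contradicting $k \neq k+r$; your eigenspace dimension count is just an explicit unpacking of that same similarity invariant, so the two arguments are essentially the same.
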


To prove the lemma we can use the property that similar matrices have the same characteristic polynomial.  

We denote a diagonal matrix with their corresponding entries in the diagonal as $\mathrm{diag}( a,b,c,d)$. We shall denote by $\{e_i\}$ the vectors of the standard basis of $\mathbb{R}^4$ and the basic translations of $\mathbb{R}^4$ by $t_i= (\Id, e_i) $. 

We have two different cases: when all the different isotypic components of real type are $1$-dimensional, and when we have one isotypic component of real type of dimension $2$ and the other two isotypic components are $1$-dimensional.
%\textcolor{red}{Pregunta mía: ¿por qué no tenemos el caso de dos componentes de tipo real y dimensión 2?} \textcolor{red}{ es que tendríamos que la acción de las primeras dos entradas  es igual en la diagonal y luego de las siguientes dos entradas también de los generadores, y las matrices serían como diag(1,1,-1,-1) o diag(-1,-1,1,1) pero la matriz $- \Id$ no puede estar porque es libre de torsión}

\subsection{All different $1$-dimensional isotypic components of real type.}

We will start with the manifolds with holonomy $H_{\pi}= \Z_2^2$. The Bieberbach groups of these closed flat manifolds are in table \ref{table:1}. 

%\clearpage

\renewcommand{\arraystretch}{1.2}
\begin{table} 
\begin{tabular}{|l|c|c|} 
\hline
$\mathbf{O^4_9}$ &    $\pi = \langle t_1, t_2, t_3, t_4, \alpha, \beta \rangle $       &  $\alpha  = \left( \mathrm{diag}(-1,1,-1,1) , \frac{1}{2}e_2 \right) $\\ 
      &        &  $\beta= \left( \mathrm{diag}(-1,-1,1,1) , \frac{1}{2}e_4 \right)$  \\
 \hline
$\mathbf{O^4_{10}}$ &  $\pi = \langle t_1, t_2, t_3, t_4, \alpha, \beta \rangle $       & $\alpha= \left( \mathrm{diag}(1,-1,-1,1) , \frac{1}{2}e_1 + \frac{1}{2}e_2 \right)$  \\
      &        &  $\beta= \left( \mathrm{diag}(-1,-1,1,1) , \frac{1}{2}e_4 \right)$  \\
 \hline
$\mathbf{O^4_{11}}$ &  $\pi = \langle t_1, t_2, t_3, t_4, \alpha, \beta \rangle $       & $\alpha= \left( \mathrm{diag}(-1,-1,1,1) , \frac{1}{2}e_3 \right)$  \\
      &        &  $\beta= \left( \mathrm{diag}(1,-1,-1,1) ,\frac{1}{2}e_1 + \frac{1}{2}e_2 + \frac{1}{2}e_4 \right)$  \\
\hline      
$\mathbf{O^4_{12}}$ &  $\pi = \langle s_1, s_2, t_3, t_4, \alpha, \beta \rangle $       & $\alpha= \left( \mathrm{diag}(-1,-1,1,1) , \frac{1}{2}e_3 \right)$  \\
      & $s_1= (\Id , e_1 -e_2 )$     &  $\beta= \left(  \mathrm{diag}(1,-1,-1,1) , \frac{1}{2}e_4 \right)$  \\
    &  $s_2= (\Id , e_1 + e_2 )$  &       \\
\hline      
$\mathbf{O^4_{13}}$ &  $\pi = \langle s_1, s_2, s_3, t_4, \alpha, \beta \rangle $       & $\alpha= \left(  \mathrm{diag}(-1,1,-1,1) , \frac{1}{4}e_1 + \frac{1}{4}e_3 + \frac{1}{2}e_4 \right)$  \\
      &  $s_1=( \Id, \frac{1}{2}(e_2 - e_3))$  &  $\beta= \left(  \mathrm{diag}(1,1,-1,-1) ,\frac{1}{4}e_1 - \frac{1}{4}e_2 + \frac{1}{2}e_4 \right)$  \\
    & $s_2=( \Id, \frac{1}{2}(e_2 + e_3))$ &     \\
    & $s_3=( \Id, \frac{1}{2}(e_1 - e_2))$ &     \\   
\hline      
$\mathbf{O^4_{14}}$ &  $\pi = \langle t_1, t_2, t_3, t_4, \alpha, \beta \rangle $       & $\alpha= \left( \mathrm{diag}(1,-1,-1,1) , \frac{1}{2}e_1 + \frac{1}{2}e_2 \right)$  \\
      &        &  $\beta= \left(  \mathrm{diag}(-1,1,-1,1) ,-\frac{1}{2}e_1 + \frac{1}{2}e_2 - \frac{1}{2}e_3 \right)$  \\
\hline      
$\mathbf{O^4_{15}}$ &  $\pi = \langle t_1, t_2, t_3, s_1, \alpha, \beta \rangle $       & $\alpha= \left(  \mathrm{diag}(1,-1,1,-1) , \frac{1}{2}e_3 \right)$  \\
      &  $s_1=( \Id, \frac{1}{2}(e_1 + e_2 + e_3 + e_4))$  &  $\beta= \left( \mathrm{diag}(1,-1,-1,1) , \frac{1}{2}e_1  \right)$  \\
\hline      
$\mathbf{O^4_{16}}$ &  $\pi = \langle s_1, s_2, s_3, t_4, \alpha, \beta \rangle $       & $\alpha= \left(  \mathrm{diag}(-1,-1,1,1) , -e_1 - e_2 + \frac{1}{2}e_3 + \frac{1}{2}e_4  \right)$  \\
      &  $s_1=( \Id, \frac{1}{2}(e_1 + e_2 - e_3))$  &  $\beta= \left(  \mathrm{diag}(-1,1,1,-1) , \frac{1}{2}e_2  \right)$  \\
    & $s_2=( \Id, \frac{1}{2}(-e_1 + e_2 + e_3))$ &    \\
    & $s_3=( \Id, \frac{1}{2}(e_1 - e_2 + e_3))$ &     \\   
\hline      
$\mathbf{O^4_{17}}$ &  $\pi = \langle s_1, s_2, t_3, t_4, \alpha, \beta \rangle $       & $\alpha= \left(  \mathrm{diag}(-1,1,1,-1) , e_1 + \frac{1}{2}e_3 \right)$  \\
      & $s_1= (\Id , e_1 -e_2 )$     &  $\beta= \left(  \mathrm{diag}(1,1,-1,-1) , -e_1- \frac{1}{2}e_3 - \frac{1}{2}e_4 \right)$  \\
    &  $s_2= (\Id , e_1 + e_2 )$  &      \\
\hline      
\end{tabular}
\caption{Bieberbach groups with holonomy $\Z_2^2$ and different $1$-dimensional isotypic components of real type. See pages 42-44 in \cite{lambert}.}
\label{table:1}
\end{table}

%\bigskip

Consider $\mathcal{S}_3$ the permutation group of 3 letters. 

\begin{proposition} \label{case 1}
The matrix part of the normalizer of $\pi$ in $\A(4)$ for the $4$-dimen\-sional closed flat manifolds with holonomy $\mathbb{Z}_2^2$ and all different $1$-dimensional isotypic components of real type is as follows:

\renewcommand{\arraystretch}{2}
\begin{tabular}{cl}
$\mathbf{O^4_{9}}$ & $\mathcal{N}_{\pi}= \left\{ \mathrm{diag}(\pm 1, \pm 1, \pm 1, \pm 1) , \begin{psmallmatrix}
0 & 0 & \pm 1 & 0  \\
0 & \pm 1 & 0 & 0 \\
\pm 1 & 0 & 0 & 0 \\ 
0 & 0 & 0     &  \pm 1     \end{psmallmatrix}  \right\}$, \\
$\mathbf{O^4_{10}}$, $\mathbf{O^4_{13}}$,    & $\mathcal{N}_{\pi}= \left\{ \mathrm{diag}(\pm 1, \pm 1, \pm 1, \pm 1)   \right\}$, \\
$\mathbf{O^4_{11}}$, $\mathbf{O^4_{12}}$, $\mathbf{O^4_{16}}$ & $\mathcal{N}_{\pi}= \left\{ \mathrm{diag}(\pm 1, \pm 1, \pm 1, \pm 1) , \begin{psmallmatrix}
0 & \pm 1 & 0 & 0  \\
\pm 1 & 0 & 0 & 0 \\
0 & 0 & \pm 1 & 0 \\ 
0 & 0 & 0     &  \pm 1     \end{psmallmatrix}  \right\}$, \\
$\mathbf{O^4_{14}}$ & $\mathcal{N}_{\pi}= \left\{ \begin{psmallmatrix}
\sigma &  0  \\
 0    &  \pm 1      \end{psmallmatrix} \mid \sigma \in \left\{  \mathrm{diag}( \pm 1, \pm 1, \pm 1) \rtimes \mathcal{S}_3  \right\} \right\}$, \\
$\mathbf{O^4_{15}}$ & $\mathcal{N}_{\pi}= \left\{ \mathrm{diag}(\pm 1, \pm 1, \pm 1, \pm 1) , \begin{psmallmatrix}
\pm 1 & 0 & 0 & 0  \\
0 & 0 & 0 & \pm 1 \\
0 & 0 & \pm 1 & 0 \\ 
0 & \pm 1 & 0 &  0      \end{psmallmatrix}  \right\}$, \\
$\mathbf{O^4_{17}}$ & $\mathcal{N}_{\pi}= \left\{ \mathrm{diag}(\pm 1, \pm 1, \pm 1, \pm 1) , \begin{psmallmatrix}
\pm 1 & 0 & 0 & 0  \\
0 & \pm 1 & 0 & 0 \\
0 & 0 & 0 & \pm 1  \\ 
0 & 0 & \pm 1 &  0      \end{psmallmatrix}  \right\}$. \\
\end{tabular}
\end{proposition}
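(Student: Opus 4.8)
The plan is to compute, for each of the eight Bieberbach groups in Table \ref{table:1}, the matrix part of its normalizer in $\A(4)$ directly from the definition, using the structural reductions already available. By Lemma \ref{diag} we may assume every element of $H_\pi = \Z_2^2$ is diagonal with entries $\pm 1$, as is indeed the case for all the $\alpha,\beta$ listed. Since $\mathcal{N}_\pi = \tau(\mathrm{N}_{\A(4)}(\pi))$, an element $A \in \G(4,\R)$ lies in $\mathcal{N}_\pi$ if and only if there is a translational vector $v$ such that conjugation by $(A,v)$ maps the generating set $\{t_i \text{ or } s_i, \alpha, \beta\}$ into $\pi$. Conjugating a pure translation $(\Id, w)$ by $(A,v)$ gives $(\Id, Aw)$, so the first requirement is that $A$ must preserve the translation lattice $L$ of $\pi$ (the $\Z$-span of the $e_i$ or the $s_i$); this already forces $A \in \G(4,\Z)$ in the lattice coordinates and, combined with the fact that $A$ must normalize the diagonal holonomy, Lemma \ref{notinv} pins $A$ down to be a signed permutation matrix respecting the eigenvalue-pattern partition of the holonomy action — this is what produces the four blocks $\{e_1\},\{e_2\},\{e_3\},\{e_4\}$ (or the appropriate grouping, e.g.\ $\mathcal S_3$ on the first three coordinates for $O^4_{14}$, where $\alpha$ and $\beta$ have the same $(-1,-1)$ sign multiset).

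The second, finer requirement handles the nontrivial translational parts $\tfrac12 e_j$ etc.\ of $\alpha,\beta$: conjugating $\alpha = (\mathrm{diag}, a)$ by $(A,v)$ yields $(A\,\mathrm{diag}\,A^{-1},\; Aa + (\Id - A\,\mathrm{diag}\,A^{-1})v)$, and for this to be an element of $\pi$ its matrix part must equal that of some word in $\alpha,\beta$ and its translation part must match modulo $L$. So for each candidate signed permutation $A$ surviving the first step, I would solve the resulting congruences mod $L$ for $v$: the signed permutation is admissible iff a solution $v$ exists. This is the bookkeeping core of the proof and must be carried out case by case; it is routine linear algebra over $\tfrac12\Z/\Z$ but it is where the sign ambiguities $\pm 1$ and the specific permutations in the statement get selected or eliminated. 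For instance, for $O^4_{10}$ and $O^4_{13}$ no off-diagonal permutation survives the translational compatibility, so $\mathcal N_\pi$ is just the diagonal sign group; for $O^4_9$ the swap of coordinates $1$ and $3$ survives because $\alpha$ has its special direction $e_2$ in the fixed slot and $\beta$'s direction $e_4$ in another fixed slot.

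I would organize the write-up as: (i) recall that $A$ normalizes $H_\pi$ and preserves $L$, hence is a signed permutation constrained by Lemma \ref{notinv}; (ii) for each manifold list the few signed-permutation candidates and, for each, exhibit a valid $v$ (a one-line choice of half-integer vector) or derive a contradiction from the translation congruence; (iii) conclude that $\mathcal N_\pi$ is exactly the set in the statement. The main obstacle is purely the volume of case analysis — eight groups, several with non-standard lattices ($s_i$-generated), meaning the congruences must be read in the $s_i$-coordinates rather than the $e_i$-coordinates — and making sure the $\pm$ signs are tracked correctly when a permutation interacts with the diagonal conjugation $A\,\mathrm{diag}\,A^{-1}$. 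A secondary subtlety is the $O^4_{14}$ case, where I must check that all six permutations in $\mathcal S_3$ on the first three coordinates (not merely the transposition swapping the two $(-1)$-eigendirections of $\alpha$) are realizable, which requires noting that $\alpha\beta$ also has eigenvalue pattern $\mathrm{diag}(-1,-1,1,1)$-type on those coordinates so the three directions are genuinely interchangeable at the level of the holonomy, and then verifying the translational parts cooperate.
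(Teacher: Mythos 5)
Your proposal is correct and follows essentially the same route as the paper: restrict candidates to signed permutation matrices fixing the trivial isotypic component (using the conjugacy constraint on the diagonal holonomy elements and lattice preservation, read in the $s_i$-coordinates where the lattice is non-standard), then for each surviving candidate solve the translational congruence $Aa+(\Id-ADA^{-1})v\equiv y \pmod L$ to decide whether it lifts to the affine normalizer. The only differences are expository; the computational core (case-by-case verification of the congruences) is the same in both.
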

\begin{proof}
These cases correspond to table \ref{table:1}. All of them have one fixed entry, in the sense that for that entry the action of the generators is trivial. The only  matrices that could belong to $\mathcal{N}_{\pi}$, besides $\mathrm{diag}(\pm 1, \pm 1, \pm 1, \pm 1)$, are permutation matrices of three coordinates. For each case we analyze which permutations are in $\mathcal{N}_{\pi}$. First, we consider the matrix part of the generators $\alpha$, $\beta$, and $\alpha \beta $, in order to see which permutations of the entries $-1$ in the diagonal of the matrices are possible. Second, for the cases without a standard lattice, we compute the type of matrices that preserve the lattice as in \cite{karla}. Finally, we analyze the permutation matrices $P$ preserving the corresponding translation of each generator, which means that we have to find a translation $x \in \R^4$ with  
\[
(P,x) \alpha (P,x)^{-1} = (B, y), \quad \text{where} \quad \tau(\beta)=B,  
\]
such that the translation $y$ is again a possible translation for the generator $\beta$. In most cases, we do not have such a translation.     
\end{proof}

Now, we study the $4$-dimensional closed flat manifolds with holonomy $H_{\pi}= \Z_2^3$; their Bieberbach groups are in table \ref{table:2}. 

\begin{table}
\begin{tabular}{|l|c|c|}
\hline
$\mathbf{N^4_{30}}$ &    $\pi = \langle t_1, t_2, t_3, t_4, \alpha, \beta, \gamma \rangle $       &  $\alpha  = \left(  \mathrm{diag}(-1,1,1,1) , \frac{1}{2}e_3  \right) $\\ 
      &        &  $\beta= \left(  \mathrm{diag}(-1,1,-1,1) , \frac{1}{2}e_2 - \frac{1}{2}e_3  \right)$  \\
    &    & $ \gamma = \left( \mathrm{diag}(-1,-1,1,1) , \frac{1}{2}e_4 \right)$  \\
 \hline
$\mathbf{N^4_{31}}$ &    $\pi = \langle t_1, t_2, t_3, t_4, \alpha, \beta, \gamma \rangle $       &  $\alpha  = \left( \mathrm{diag}(1,1,-1,1) , \frac{1}{2}e_2  \right) $\\ 
      &        &  $\beta= \left( \mathrm{diag}(1,-1,-1,1) , \frac{1}{2}e_1 - \frac{1}{2}e_2  \right)$  \\
    &    & $ \gamma = \left( \mathrm{diag}(-1,-1,1,1) , \frac{1}{2}e_1 + \frac{1}{2}e_4 \right)$  \\
 \hline
$\mathbf{N^4_{32}}$ &    $\pi = \langle t_1, t_2, t_3, t_4, \alpha, \beta, \gamma \rangle $       &  $\alpha  = \left( \mathrm{diag}(-1,1,1,1) , \frac{1}{2}e_3  \right) $\\ 
      &        &  $\beta= \left( \mathrm{diag}(-1,1,-1,1) , \frac{1}{2}e_2 - \frac{1}{2}e_3  \right)$  \\
    &    & $ \gamma = \left( \mathrm{diag}(-1,-1,1,1) , \frac{1}{2}e_1 + \frac{1}{2}e_4 \right)$  \\
 \hline
$\mathbf{N^4_{33}}$ &    $\pi = \langle t_1, t_2, t_3, t_4, \alpha, \beta, \gamma \rangle $       &  $\alpha  = \left( \mathrm{diag}(-1,1,1,1) , \frac{1}{2}e_3  \right) $\\ 
      &        &  $\beta= \left( \mathrm{diag}(-1,1,-1,1) , \frac{1}{2}e_2 - \frac{1}{2}e_3  \right)$  \\
    &    & $ \gamma = \left( \mathrm{diag}(-1,-1,1,1) , \frac{1}{2}e_1 + \frac{1}{2}e_3 + \frac{1}{2}e_4 \right)$  \\
 \hline
$\mathbf{N^4_{34}}$ &    $\pi = \langle t_1, t_2, t_3, t_4, \alpha, \beta, \gamma \rangle $       &  $\alpha  = \left( \mathrm{diag}(-1,1,1,1) , \frac{1}{2}e_3  \right) $\\ 
      &        &  $\beta= \left( \mathrm{diag}(-1,1,-1,1) , \frac{1}{2}e_1 + \frac{1}{2}e_2 + \frac{1}{2}e_3  \right)$  \\
    &    & $ \gamma = \left( \mathrm{diag}(-1,-1,1,1) , \frac{1}{2}e_1 +  \frac{1}{2}e_4 \right)$  \\
 \hline
$\mathbf{N^4_{35}}$ &    $\pi = \langle t_1, t_2, t_3, t_4, \alpha, \beta, \gamma \rangle $       &  $\alpha  = \left( \mathrm{diag}(-1,1,1,1) , \frac{1}{2}e_3  \right) $\\ 
      &        &  $\beta= \left( \mathrm{diag}(-1,1,-1,1) , \frac{1}{2}e_1 + \frac{1}{2}e_2 - \frac{1}{2}e_3  \right)$  \\
    &    & $ \gamma = \left( \mathrm{diag}(1,-1,-1,1) , -\frac{1}{2}e_2 +  \frac{1}{2}e_4 \right)$  \\
\hline
$\mathbf{N^4_{36}}$ &    $\pi = \langle t_1, t_2, t_3, t_4, \alpha, \beta, \gamma \rangle $       &  $\alpha  = \left( \mathrm{diag}(-1,1,1,1) , \frac{1}{2}e_3  \right) $\\ 
      &        &  $\beta= \left( \mathrm{diag}(-1,1,-1,1) , -\frac{1}{2}e_1 + \frac{1}{2}e_2 - \frac{1}{2}e_3  \right)$  \\
    &    & $ \gamma = \left( \mathrm{diag}(-1,-1,1,1) ,  \frac{1}{2}e_4 \right)$  \\
\hline
$\mathbf{N^4_{37}}$ &    $\pi = \langle t_1, t_2, t_3, t_4, \alpha, \beta, \gamma \rangle $       &  $\alpha  = \left( \mathrm{diag}(1,1,-1,1) , \frac{1}{2}e_2  \right) $\\ 
      &        &  $\beta= \left( \mathrm{diag}(1,-1,-1,1) , \frac{1}{2}e_1 - \frac{1}{2}e_2 - \frac{1}{2}e_3  \right)$  \\
    &    & $ \gamma = \left( \mathrm{diag}(-1,-1,1,1) , \frac{1}{2}e_1 + \frac{1}{2}e_3 +  \frac{1}{2}e_4 \right)$  \\
\hline
$\mathbf{N^4_{38}}$ &    $\pi = \langle t_1, t_2, t_3, t_4, \alpha, \beta, \gamma \rangle $       &  $\alpha  = \left( \mathrm{diag}(-1,1,-1,1) , \frac{1}{2}e_2  \right) $\\ 
      &        &  $\beta= \left( \mathrm{diag}(1,-1,-1,1) , -\frac{1}{2}e_1 + \frac{1}{2}e_2 - \frac{1}{2}e_3  \right)$  \\
    &    & $ \gamma = \left( \mathrm{diag}(-1,1,1,1) ,  \frac{1}{2}e_4 \right)$  \\
\hline
$\mathbf{N^4_{39}}$ &    $\pi = \langle t_1, t_2, t_3, t_4, \alpha, \beta, \gamma \rangle $       &  $\alpha  = \left( \mathrm{diag}(-1,1,-1,1) , \frac{1}{2}e_2 - \frac{1}{2}e_3  \right) $\\ 
      &        &  $\beta= \left( \mathrm{diag}(-1,-1,1,1) , \frac{1}{2}e_1 + \frac{1}{2}e_3  \right)$  \\
    &    & $ \gamma = \left( \mathrm{diag}(1,1,-1,1) , \frac{1}{2}e_2 + \frac{1}{2}e_4 \right)$  \\
\hline
$\mathbf{N^4_{45}}$ &    $\pi = \langle t_1, t_2, t_3, t_4, \alpha, \beta, \gamma \rangle $       &  $\alpha  = \left( \mathrm{diag}(1,-1,-1,-1) , -\frac{1}{2}e_1 - \frac{1}{2}e_2  \right) $\\ 
      &        &  $\beta= \left( \mathrm{diag}(-1,-1,1,1) , \frac{1}{2}e_1 + \frac{1}{2}e_4  \right)$  \\
    &    & $ \gamma = \left( \mathrm{diag}(1,-1,-1,1) , -\frac{1}{2}e_1 - \frac{1}{2}e_3 \right)$  \\
 \hline
$\mathbf{N^4_{46}}$ &    $\pi = \langle t_1, t_2, t_3, t_4, \alpha, \beta, \gamma \rangle $       &  $\alpha  = \left( \mathrm{diag}(-1,1,-1,-1) , \frac{1}{2}e_2  \right) $\\ 
      &        &  $\beta= \left( \mathrm{diag}(-1,-1,1,1) , \frac{1}{2}e_1 + \frac{1}{2}e_4  \right)$  \\
    &    & $ \gamma = \left( \mathrm{diag}(-1,1,-1,1) , \frac{1}{2}e_2 - \frac{1}{2}e_3 \right)$  \\
 \hline
\end{tabular}
\caption{Bieberbach groups with holonomy $\Z_2^3$. See pages 60-65 and 68 in \cite{lambert}.}
\label{table:2}
\end{table}

%\bigskip

\begin{proposition} \label{case 2}
The matrix part of the normalizer of $\pi$ in $\A(4)$ for the $4$-dimen\-sional closed flat manifolds with holonomy $\mathbb{Z}_2^3$ is as follows:
\[
\begin{array}{cc}
\mathbf{N^4_{30}}, \mathbf{N^4_{31}}, \mathbf{N^4_{32}}, \mathbf{N^4_{33}}, \mathbf{N^4_{34}}, \mathbf{N^4_{35}}, \mathbf{N^4_{36}}, \mathbf{N^4_{37}}, \mathbf{N^4_{38}}, \mathbf{N^4_{39}}  & \mathcal{N}_{\pi}= \left\{ \mathrm{diag}(\pm 1, \pm 1, \pm 1, \pm 1)   \right\},
\end{array}
\]
\[
\begin{array}{cl}
\mathbf{N^4_{45}} & \mathcal{N}_{\pi}= \left\{ \mathrm{diag}(\pm 1, \pm 1, \pm 1, \pm 1), \begin{psmallmatrix}
\pm 1 & 0 & 0 & 0  \\
0 & 0 & \pm 1 & 0 \\
0 & \pm 1 & 0 & 0  \\ 
0 & 0 & 0 &  \pm 1      \end{psmallmatrix}    \right\}, \\
\mathbf{N^4_{46}}  & \mathcal{N}_{\pi}= \left\{ \mathrm{diag}(\pm 1, \pm 1, \pm 1, \pm 1)   \right\}.
\end{array}
\]
\end{proposition}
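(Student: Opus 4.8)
The plan is to run the same argument as in Proposition~\ref{case 1}, now for the twelve Bieberbach groups listed in Table~\ref{table:2}. Fix one such $\pi$. By Proposition~\ref{teich}(1) its holonomy $H_\pi\cong\Z_2^3$ decomposes $\R^4$ into four pairwise non-isomorphic $1$-dimensional isotypic components of real type, and by Lemma~\ref{diag} we may assume $H_\pi$ is generated by the diagonal $\pm1$ matrices printed in the table. Since in every case of Table~\ref{table:2} the translation lattice is the standard one $\Z^4=\langle t_1,t_2,t_3,t_4\rangle$, any $(X,v)\in\mathrm{N}_{\A(4)}(\pi)$ must preserve $\Z^4$, so $X\in\G(4,\Z)$; in particular there is no auxiliary lattice computation to carry out as there is in Proposition~\ref{case 1}. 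Because the four isotypic components are distinct, $\mathrm{End}_{H_\pi}(\R^4)$ is the diagonal subalgebra, hence any $X\in\G(4,\R)$ normalising $H_\pi$ is monomial; together with $X\in\G(4,\Z)$ this forces $X$ to be a signed permutation matrix whose permutation preserves the subgroup $H_\pi\subseteq\Z_2^4$ (Lemma~\ref{notinv} gives the corresponding necessary condition on the matrix parts of individual generators). Writing $H_\pi$ as the kernel of a functional $\epsilon\mapsto\sum_{i\in S}\epsilon_i$ on $\Z_2^4$ for the subset $S\subseteq\{1,2,3,4\}$ read off from the table, the admissible permutations are precisely those fixing $S$ setwise.

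Before the main step I would record two easy facts. First, every diagonal matrix $D=\mathrm{diag}(\pm1,\pm1,\pm1,\pm1)$ already lies in $\mathcal{N}_\pi$: indeed $(D,0)\in\mathrm{N}_{\A(4)}(\pi)$, since $D$ commutes with $H_\pi$, maps each $t_i$ to $t_i^{\pm1}$, and $Da_\delta\equiv a_\delta\pmod{\Z^4}$ for every generator $\delta=(A_\delta,a_\delta)$ (the entries of $a_\delta$ are half-integers). Since $\mathcal{N}_\pi$ is a group containing all such $D$, it suffices to decide, for each admissible permutation $\sigma$, whether the plain permutation matrix $P_\sigma$ lies in $\mathcal{N}_\pi$. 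Second, inspecting Table~\ref{table:2} one sees that each $H_\pi$ contains an odd-weight element (a single reflection or a weight-three diagonal matrix), so $|S|\in\{1,3\}$ and never $4$; hence the permutations to be tested always form a copy of $\mathcal{S}_3$ acting on the three coordinates different from the distinguished index $m$ given by $S=\{m\}$ or $S^{c}=\{m\}$ (when $|S|=1$, coordinate $m$ is the one fixed by all of $H_\pi$).

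The decisive step is the translational obstruction, handled exactly as in Proposition~\ref{case 1}. For an admissible $\sigma$ and a generator $\delta=(A_\delta,a_\delta)$, the diagonal matrix $A'_\delta:=P_\sigma A_\delta P_\sigma^{-1}$ again belongs to $H_\pi$, so it is the matrix part of an explicit word $w_\delta$ in the generators, with translation part $b_\delta$ well-defined modulo $\Z^4$; and $(P_\sigma,v)$ normalises $\pi$ if and only if there is one vector $v\in\R^4$ with
\[
(I-A'_\delta)\,v\;\equiv\;b_\delta-P_\sigma a_\delta\pmod{\Z^4}\qquad\text{for every generator }\delta .
\]
Here $I-A'_\delta$ is diagonal with entries $0$ and $2$, so on a coordinate fixed by $A'_\delta$ the congruence is the rigid condition $(b_\delta-P_\sigma a_\delta)_i\in\Z$, while on the other coordinates it merely pins down $v_i$ modulo $\tfrac12\Z$; one then checks, for each nontrivial $\sigma$, whether all rigid conditions hold and whether the resulting half-integer constraints on the $v_i$ are simultaneously satisfiable as $\delta$ runs over $\alpha,\beta,\gamma$ (equivalently over $\alpha,\beta,\alpha\beta,\dots$). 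For $N^4_{30},\dots,N^4_{39}$ and $N^4_{46}$ some rigid condition fails for every nontrivial $\sigma$, giving $\mathcal{N}_\pi=\{\mathrm{diag}(\pm1,\pm1,\pm1,\pm1)\}$; for $N^4_{45}$ the same computation shows that exactly the transposition of the second and third coordinates survives, e.g.\ with $v=(\tfrac14,\tfrac14,\tfrac14,0)$, which produces the stated $\mathcal{N}_\pi$. The main obstacle is purely organisational: no individual verification is difficult, but one must keep track of all twelve manifolds, their sets $S$, their $\mathcal{S}_3$'s of candidates, and the half-integer congruences without omission, and in particular note that $N^4_{45}$ and $N^4_{46}$ have the \emph{same} holonomy subgroup $H_\pi$ while their translation data make one transposition admissible for the first and none for the second. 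I would write out one representative verification --- say the positive one for $N^4_{45}$ --- in full and leave the remaining checks as routine.
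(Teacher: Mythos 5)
Your proposal follows the same route as the paper's own proof (which itself refers back to the template of Proposition \ref{case 1}): use Lemma \ref{notinv} and the distinctness of the four $1$-dimensional isotypic components to cut the candidates down to signed permutation matrices preserving $H_\pi$, observe that all sign matrices already normalize $\pi$, and then eliminate the residual permutations by the translational congruences, which fail everywhere except for the single transposition in $N^4_{45}$. The only difference is that you organize the computation more explicitly (the functional $\sum_{i\in S}\epsilon_i$ describing $H_\pi$ and the congruence $(I-A'_\delta)v\equiv b_\delta-P_\sigma a_\delta \pmod{\Z^4}$), but the mathematical content is the same, and your sample verification for $N^4_{45}$ with $v=(\tfrac14,\tfrac14,\tfrac14,0)$ is correct.
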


\begin{proof}
These cases correspond to table \ref{table:2}. For the manifolds with one fixed entry, the matrix group is 
\[
\begin{split}
H_{\pi} = \left\{ \Id, \mathrm{diag}(-1,1,1,1), \mathrm{diag}(1,-1,1,1), \mathrm{diag}(1,1,-1,1), \quad  \right. \\ 
\left. \mathrm{diag}(-1,-1,1,1), \mathrm{diag}(-1,1,-1,1), \mathrm{diag}(1,-1,-1,1), \mathrm{diag}(-1,-1,-1,1) \right\}.
\end{split} 
\]
The only possibilities for permutation are between the ones with only one $-1$ in the diagonal or between the ones with two $-1$ in the diagonal, by lemma \ref{notinv}. Moreover, if we do not have the permutations for one $-1$ in $\mathcal{N}_{\pi}$ then it is not possible for the other permutations of two $-1$ to be in $\mathcal{N}_{\pi}$ and vice versa. The permutations do not preserve the translations of the generators, then the matrix part of the normalizer is $\mathcal{N}_{\pi}= \left\{ \mathrm{diag}(\pm 1, \pm 1, \pm 1, \pm 1)   \right\}$  in all cases.   
 
Now, we consider the manifolds without fixed entries. In this case, the matrix group is 
\begin{multline*}
H_{\pi} = \left\{ \Id, \mathrm{diag}(-1,-1,1,1), \mathrm{diag}(1,-1,-1,1), \mathrm{diag}(-1,1,-1,1) \quad  \right. \\ 
\left. \mathrm{diag}(1,-1,-1,-1), \mathrm{diag}(-1,1,-1,-1), \mathrm{diag}(-1,-1,1,-1), \mathrm{diag}(1,1,1,-1) \right\} .
\end{multline*} 
Again, from lemma \ref{notinv} we know that the only possibilities for permutation are between the ones with only two $-1$ in the diagonal or between the ones with three $-1$ in the diagonal. The situation is similar to the case of one fixed entry. For this case, we have that one flat manifold, $N^4_{45}$, admits one permutation.     
\end{proof}

%\bigskip

The manifolds with Bieberbach groups in table \ref{table:3}, have holonomy $H_{\pi}=\mathrm{A}_4$ or $\mathrm{A}_4 \times \Z_2$. 

\begin{table}
\begin{tabular}{|l|c|c|}
\hline
$\mathbf{O^4_{26}}$ &    $\pi = \langle t_1, t_2, t_3, t_4, \alpha, \beta, \gamma \rangle $       &  $\alpha  = \left( \mathrm{diag}(1,1,-1,-1) , -\frac{1}{2}e_2 - \frac{1}{2}e_4  \right) $\\ 
      &        &  $\beta= \left( \mathrm{diag}(1,-1,-1,1) , \frac{1}{2}e_3 + \frac{1}{2}e_4  \right)$  \\
    &    & $ \gamma = \left( \begin{psmallmatrix}
1 & 0 & 0 & 0 \\
0 & 0 & 0 & 1  \\
0 & 1 & 0 & 0 \\
0 & 0 & 1 & 0  \\      \end{psmallmatrix} , \frac{1}{3}e_1 + \frac{1}{2}e_2 - \frac{1}{2}e_4 \right)$  \\
 \hline
$\mathbf{O^4_{27}}$ &    $\pi = \langle t_1, s_1, s_2, s_3, \alpha, \beta, \gamma \rangle $       &  $\alpha  = \left(  \mathrm{diag}(1,-1,1,-1) , \frac{1}{2}e_1 + \frac{1}{2}e_3 + \frac{1}{2}e_4  \right) $\\ 
      &  $s_1= (\Id , \frac{1}{2}(-e_1 +e_2-e_3+e_4) )$   &  $\beta= \left( \mathrm{diag}(1,-1,-1,1) , -\frac{1}{2}e_2 + \frac{1}{2}e_4  \right)$  \\
    & $s_2= (\Id , \frac{1}{2}(-e_1 +e_2+e_3-e_4) )$  & $ \gamma = \left( \begin{psmallmatrix}
1 & 0 & 0 & 0 \\
0 & 0 & -1 & 0  \\
0 & 0 & 0 & -1 \\
0 & 1 & 0 & 0  \\      \end{psmallmatrix} , \frac{1}{6}e_1 + \frac{1}{2}e_3  \right)$  \\
  &  $s_3= (\Id , \frac{1}{2}(e_1 - e_2+e_3+e_4) )$  &     \\
 \hline
$\mathbf{N^4_{43}}$ &    $\pi = \langle t_1, t_2, t_3, t_4, \alpha, \beta, \gamma \rangle $       &  $\alpha  = \left(  \mathrm{diag}(1,1,-1,-1) , \frac{1}{2}e_2 + \frac{1}{2}e_3  \right) $\\ 
      &        &  $\beta= \left(  \mathrm{diag}(1,-1,-1,1) , \frac{1}{2}e_2 - \frac{1}{2}e_4  \right)$  \\
    &    & $ \gamma = \left(  \begin{psmallmatrix}
1 & 0 & 0 & 0 \\
0 & 0 & 0 & -1  \\
0 & 1 & 0 & 0 \\
0 & 0 & 1 & 0  \\      \end{psmallmatrix} , \frac{1}{6}e_1  \right)$  \\
 \hline
\end{tabular}
\caption{Bieberbach groups with holonomy $\mathrm{A}_4$ or $\mathrm{A}_4 \times \Z_2$. See pages 47, 48 and 67 in \cite{lambert}.}
\label{table:3}
\end{table}

Since the representation of $\mathrm{A}_4$ in $\R^3$ is irreducible, we have two isotypic components $\R^4 = V_1 \times V_2$, where $V_2 = \R^3$ and End$_{H_{\pi}}(V_2) \cong \{ \lambda\, \Id \mid \lambda \in \R \} \cong \R$. This means that we have only two different $1$-dimensional isotypic components of real type, and the action of $\mathcal{N}_{\pi}$ reduces to an action on $\R^2$.

\begin{proposition} \label{case 3}
The matrix part of the normalizer of $\pi$ in $\A(4)$ for the $4$-dimen\-sional closed flat manifolds with holonomy $\mathrm{A}_4$ and $\mathrm{A}_4 \times \mathbb{Z}_2 $ is as follows:
\[
\begin{array}{cc}
\mathbf{O^4_{26}} & \mathcal{N}_{\pi}= \left\{ \begin{psmallmatrix}
\pm 1 & 0   \\
0 & \pm \Id   \\   \end{psmallmatrix}  , \begin{psmallmatrix}
\pm 1 & 0   \\
0 & \pm \sigma_1   \\   \end{psmallmatrix}, \begin{psmallmatrix}
\pm 1 & 0   \\
0 & \pm \sigma_2   \\   \end{psmallmatrix} \mid \sigma_1= \begin{psmallmatrix}
0 & 1 & 0   \\
0 & 0 & 1   \\ 
1 & 0 & 0  \end{psmallmatrix}, \sigma_2 = \begin{psmallmatrix}
0 & 0 & 1   \\
1 & 0 & 0   \\ 
0 & 1 & 0  \end{psmallmatrix}    \right\} , \\
\mathbf{O^4_{27}}  & \mathcal{N}_{\pi}= \left\{ \begin{psmallmatrix}
\pm 1 & 0   \\
0 & \pm \Id   \\   \end{psmallmatrix}   \right\} , \\
\mathbf{N^4_{43}} & \mathcal{N}_{\pi}= \left\{ \begin{psmallmatrix}
\pm 1 & 0   \\
0 & \pm \Id   \\   \end{psmallmatrix}  , \begin{psmallmatrix}
\pm 1 & 0   \\
0 & \pm \sigma_1   \\   \end{psmallmatrix}, \begin{psmallmatrix}
\pm 1 & 0   \\
0 & \pm \sigma_2   \\   \end{psmallmatrix} \mid \sigma_1= \begin{psmallmatrix}
0 & 1 & 0   \\
0 & 0 & 1   \\ 
-1 & 0 & 0  \end{psmallmatrix}, \sigma_2 = \begin{psmallmatrix}
0 & 0 & 1   \\
-1 & 0 & 0   \\ 
0 & -1 & 0  \end{psmallmatrix}    \right\} . \\
\end{array}
\]
\end{proposition}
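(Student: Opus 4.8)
The plan is to combine the isotypic decomposition of Section~\ref{sec:teichmuller} with the generator-by-generator bookkeeping already used in Propositions~\ref{case 1} and~\ref{case 2}. Write $\R^4=V_1\oplus V_2$ for the isotypic decomposition, with $\dim V_1=1$, on which $H_\pi$ acts trivially, and $\dim V_2=3$, on which $H_\pi$ acts through the standard irreducible $3$-dimensional representation of $\mathrm A_4$ (and through $\mathrm A_4\times\Z_2$ for the manifold with that holonomy). Any matrix normalizing $H_\pi$ carries $H_\pi$-invariant irreducible subspaces to such subspaces, hence permutes the isotypic components; since $V_1$ and $V_2$ have different dimensions it must fix each of them. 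So every element of $\mathcal N_\pi$ has block form $\begin{psmallmatrix}a&0\\0&Y\end{psmallmatrix}$ with $a\in\G(1,\R)$ and $Y$ normalizing $H_\pi|_{V_2}$ inside $\G(3,\R)$.

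Next I would determine $\mathrm N_{\G(3,\R)}(H_\pi|_{V_2})$. Since $H_\pi|_{V_2}$ is irreducible of real type, its centralizer in $\G(3,\R)$ is only the scalars $\R^\times\,\Id$ (Schur), so its normalizer is $\R^\times$ times a finite group; and since $H_\pi|_{V_2}$ is realized in Lambert's coordinates by signed permutation matrices (the Klein four-group of diagonal $\pm1$ matrices with an even number of entries $-1$, a three-cycle permutation matrix, and, in the $\mathrm A_4\times\Z_2$ case, also $-\Id$), this finite group sits inside the group of $3\times3$ signed permutation matrices. Hence, before any arithmetic is imposed, $\mathcal N_\pi$ lies in the set of block matrices $\begin{psmallmatrix}a&0\\0&\lambda P\end{psmallmatrix}$ with $a,\lambda\in\R^\times$ and $P$ a signed permutation matrix on $V_2$, the entry $a$ being so far unconstrained because $H_\pi$ acts trivially on $V_1$.

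The decisive step is to impose that $(X,v)$ normalize the whole Bieberbach group $\pi$, not merely $H_\pi$. Following the method of Propositions~\ref{case 1} and~\ref{case 2} and of \cite{karla}, for each generator $g=(A,t_g)$ one must find $v\in\R^4$ so that $(X,v)\,g\,(X,v)^{-1}$ has matrix part in $H_\pi$ and translational part a legitimate one for the matching generator modulo the lattice $L$ of pure translations of $\pi$. First, $X$ must preserve $L$; the translations in $\pi$ give $\Z^4\subseteq L$, with $L=\Z^4$ for $O^4_{26}$ and $N^4_{43}$ and $L=\Z^4+\Z\cdot\tfrac12(e_1+e_2+e_3+e_4)$ for $O^4_{27}$ (also generated by the half-integer vectors $s_1,s_2,s_3$), and intersecting $L$ with $V_1$ and with $V_2$ yields $\Z e_1$ and $\Z^3$, forcing $a=\pm1$ and making $Y$ an unscaled signed permutation matrix. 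It then remains to run the translational conditions on $\alpha,\beta,\gamma$ generator by generator: for $O^4_{26}$ and $N^4_{43}$ precisely the signed permutation matrices whose underlying permutation lies in $\langle(1\,2\,3)\rangle$ are compatible, yielding the lists involving $\pm\Id,\pm\sigma_1,\pm\sigma_2$, whereas for $O^4_{27}$ the half-integer generators of $L$ and the $\tfrac16$-glide of $\gamma$ together rule out the three-cycles, leaving $Y=\pm\Id$.

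The hard part is this last step: the explicit, case-by-case verification that every candidate $Y$ outside the listed ones fails the translation test, that the cyclic permutations $\sigma_1,\sigma_2$ do preserve all three glide vectors for $O^4_{26}$ and $N^4_{43}$, and that for $O^4_{27}$ the vectors $s_1,s_2,s_3$ and the $\tfrac16$-glide of $\gamma$ really do obstruct the three-cycles. None of this is conceptually deep, but it is the delicate arithmetic that distinguishes the three answers, and where essentially all the work lies.
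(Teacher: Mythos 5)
Your proposal follows essentially the same route as the paper: split off the $1$-dimensional trivial isotypic component from the $3$-dimensional irreducible one (forcing the block-diagonal form), observe that the only candidates on the $3$-dimensional block are signed permutation matrices — i.e.\ the elements permuting $\alpha$, $\beta$, $\alpha\beta$ — and then eliminate candidates via the lattice-preservation and translation conditions generator by generator, exactly as in Propositions \ref{case 1} and \ref{case 2}. Your write-up is in fact more explicit than the paper's two-line proof (e.g.\ the Schur-lemma justification and the computation of $L$ for $O^4_{27}$), and correctly identifies where the remaining arithmetic work lies.
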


\begin{proof}
These cases correspond to table \ref{table:3}. We have the option of permutations in  $\mathcal{N}_{\pi}$, which send the generator $\alpha$ either to the generator $\beta$ or to the generator $\alpha \beta$.   
\end{proof}

\subsection{One $2$-dimensional isotypic component of real type and two $1$-dimen\-sional components.}
The manifolds of these cases have holonomy $H_{\pi}= \Z_2^2$ and their Bieberbach groups are in table \ref{table:4}.

\begin{table}
\begin{tabular}{|l|c|c|}
\hline
$\mathbf{N^4_{3}}$ &    $\pi = \langle t_1, t_2, t_3, t_4, \alpha, \beta \rangle $       &  $\alpha  = \left( \mathrm{diag}(1,1,1,-1) , \frac{1}{2}e_2 \right) $\\ 
      &        &  $\beta= \left( \mathrm{diag}(1,1,-1,-1) , \frac{1}{2}e_1 + \frac{1}{2}e_2 \right)$  \\
 \hline
$\mathbf{N^4_{4}}$ &    $\pi = \langle t_1, t_2, t_3, t_4, \alpha, \beta \rangle $       &  $\alpha  = \left( \mathrm{diag}(1,1,1,-1) , \frac{1}{2}e_2 \right) $\\ 
      &        &  $\beta= \left( \mathrm{diag}(1,1,-1,-1) , \frac{1}{2}e_1 + \frac{1}{2}e_2 + \frac{1}{2}e_4 \right)$  \\
 \hline
$\mathbf{N^4_{5}}$ &    $\pi = \langle t_1, t_2, t_3, t_4, \alpha, \beta \rangle $       &  $\alpha  = \left(  \mathrm{diag}(1,1,-1,1) , \frac{1}{2}e_1 + \frac{1}{2}e_4 \right) $\\ 
      &        &  $\beta= \left( \mathrm{diag}(1,1,-1,-1) , \frac{1}{2}e_1 + \frac{1}{2}e_2 + \frac{1}{2}e_3 - \frac{1}{2}e_4 \right)$  \\
 \hline
$\mathbf{N^4_{6}}$ &    $\pi = \langle t_1, t_2, t_3, s_1, \alpha, \beta \rangle $       &  $\alpha  = \left( \mathrm{diag}(1,1,1,-1) , \frac{1}{2}e_2  \right) $\\ 
      &  $s_1=(\Id, \frac{1}{2}(e_3 + e_4))$  &  $\beta= \left(  \mathrm{diag}(1,1,-1,-1) , \frac{1}{2}e_1 + \frac{1}{2}e_2  \right)$  \\
 \hline
$\mathbf{N^4_{7}}$ &    $\pi = \langle t_1, t_2, s_1, s_2, \alpha, \beta \rangle $       &  $\alpha  = \left( \mathrm{diag}(1,1,1,-1) , \frac{1}{4}e_1 + \frac{1}{4}e_3  \right) $\\ 
      &  $s_1=(\Id, \frac{1}{2}(e_1 + e_3))$  &  $\beta= \left(  \mathrm{diag}(1,1,-1,-1) , \frac{1}{2}e_1 + \frac{1}{2}e_2 - \frac{1}{4}e_3 + \frac{1}{4}e_4  \right)$  \\
    & $s_2=(\Id, \frac{1}{2}(e_1 + e_4))$  &   \\
 \hline
$\mathbf{N^4_{8}}$ &    $\pi = \langle t_1, t_2, t_3, t_4, \alpha, \beta \rangle $       &  $\alpha  = \left(  \mathrm{diag}(1,1,1,-1) , \frac{1}{2}e_3  \right) $\\ 
      &     &  $\beta= \left(  \mathrm{diag}(1,1,-1,-1) , \frac{1}{2}e_2 - \frac{1}{2}e_3  \right)$  \\
\hline
$\mathbf{N^4_{9}}$ &    $\pi = \langle t_1, t_2, t_3, t_4, \alpha, \beta \rangle $       &  $\alpha  = \left(  \mathrm{diag}(1,1,1,-1) , \frac{1}{2}e_3  \right) $\\ 
      &     &  $\beta= \left( \mathrm{diag}(1,1,-1,-1) , \frac{1}{2}e_2 - \frac{1}{2}e_3 + \frac{1}{2}e_4  \right)$  \\
 \hline
$\mathbf{N^4_{10}}$ &    $\pi = \langle t_1, t_2, s_1, t_4, \alpha, \beta \rangle $       &  $\alpha  = \left( \mathrm{diag}(1,1,1,-1) , \frac{1}{2}e_1  \right) $\\ 
      &  $s_1=(\Id, \frac{1}{2}(e_1 + e_3))$  &  $\beta= \left(  \mathrm{diag}(1,1,-1,-1) , \frac{1}{2}e_1 + \frac{1}{2}e_2  \right)$  \\
 \hline
$\mathbf{N^4_{11}}$ &    $\pi = \langle t_1, t_2, s_1, t_4, \alpha, \beta \rangle $       &  $\alpha  = \left(  \mathrm{diag}(1,1,-1,1) , \frac{1}{2}e_4  \right) $\\ 
      &  $s_1=(\Id, \frac{1}{2}(e_1 + e_3))$  &  $\beta= \left(  \mathrm{diag}(1,1,-1,-1) , \frac{1}{2}e_2 + \frac{1}{2}e_4  \right)$  \\
 \hline
$\mathbf{N^4_{12}}$ &    $\pi = \langle t_1, t_2, s_1, t_4, \alpha, \beta \rangle $       &  $\alpha  = \left( \mathrm{diag}(1,1,1,-1) , \frac{1}{2}e_1  \right) $\\ 
      &  $s_1=(\Id, \frac{1}{2}(e_1 + e_3))$  &  $\beta= \left(  \mathrm{diag}(1,1,-1,-1) , \frac{1}{2}e_1 + \frac{1}{2}e_2 + \frac{1}{2}e_4  \right)$  \\
 \hline
$\mathbf{N^4_{13}}$ &    $\pi = \langle t_1, t_2, t_3, s_1, \alpha, \beta \rangle $       &  $\alpha  = \left( \mathrm{diag}(1,1,-1,1) , -\frac{1}{2}e_1  \right) $\\ 
      &  $s_1=(\Id, \frac{1}{2}(-e_1 + e_3 + e_4))$  &  $\beta= \left(  \mathrm{diag}(1,1,-1,-1) , \frac{1}{2}e_1 + \frac{1}{2}e_2  \right)$  \\
 \hline
$\mathbf{N^4_{22}}$ &    $\pi = \langle t_1, t_2, t_3, t_4, \alpha, \beta \rangle $       &  $\alpha  = \left( \mathrm{diag}(-1,-1,1,1) , \frac{1}{2}e_3 \right) $\\ 
      &        &  $\beta= \left( \mathrm{diag}(-1,-1,-1,1) , \frac{1}{2}e_3 + \frac{1}{2}e_4 \right)$  \\
 \hline
$\mathbf{N^4_{23}}$ &    $\pi = \langle t_1, t_2, t_3, t_4, \alpha, \beta \rangle $       &  $\alpha  = \left( \mathrm{diag}(-1,-1,1,1) , -\frac{1}{2}e_2 + \frac{1}{2}e_3 \right) $\\ 
      &        &  $\beta= \left( \mathrm{diag}(-1,-1,-1,1) , \frac{1}{2}e_3 + \frac{1}{2}e_4 \right)$  \\
 \hline
$\mathbf{N^4_{24}}$ &    $\pi = \langle t_1, t_2, t_3, t_4, \alpha, \beta \rangle $       &  $\alpha  = \left( \mathrm{diag}(1,1,-1,1) , \frac{1}{2}e_2  \right) $\\ 
      &        &  $\beta= \left( \mathrm{diag}(-1,-1,1,1) , \frac{1}{2}e_2 + \frac{1}{2}e_4 \right)$  \\
 \hline
$\mathbf{N^4_{25}}$ &    $\pi = \langle t_1, t_2, t_3, t_4, \alpha, \beta \rangle $       &  $\alpha  = \left( \mathrm{diag}(1,1,-1,1) , \frac{1}{2}e_2  \right) $\\ 
      &        &  $\beta= \left( \mathrm{diag}(-1,-1,1,1) , \frac{1}{2}e_2 - \frac{1}{2}e_3 + \frac{1}{2}e_4 \right)$  \\
 \hline
$\mathbf{N^4_{26}}$ &    $\pi = \langle t_1, t_2, s_1, t_4, \alpha, \beta \rangle $       &  $\alpha  = \left( \mathrm{diag}(1,1,-1,1) , \frac{1}{2}e_2  \right) $\\ 
      &  $s_1=(\Id , \frac{1}{2}(e_1 + e_3))$  &  $\beta= \left( \mathrm{diag}(-1,-1,1,1) , \frac{1}{2}e_2 + \frac{1}{2}e_4 \right)$  \\
\hline
$\mathbf{N^4_{44}}$ &    $\pi = \langle t_1, t_2, t_3, t_4, \alpha, \beta \rangle $       &  $\alpha  = \left( \mathrm{diag}(-1,-1,-1,1) , \frac{1}{2}e_2 + \frac{1}{2}e_4 \right) $\\ 
      &        &  $\beta= \left( \mathrm{diag}(1,1,-1,-1) , \frac{1}{2}e_2 - \frac{1}{2}e_3 - \frac{1}{2}e_4 \right)$  \\
 \hline
\end{tabular}
\caption{Bieberbach groups with holonomy $\Z_2^2$, one $2$-dimensional isotypic component of real type and two $1$-dimensional components. See pages 50-54, 57-59, and 67 in \cite{lambert}.}
\label{table:4}
\end{table}

Let us consider a matrix of the form 
$ X= \begin{psmallmatrix}
a & b \\ c & d \end{psmallmatrix} $ and denote 
\begin{align*}
\Gamma_0(4) & = \{ X \in \G(2, \Z) \mid b \equiv 0 \text{ mod }2 \text{ and } c\equiv 0 \text{ mod 4 } \} \\
\Gamma_2(4) & = \{ X \in \G(2, \Z) \mid b \equiv 0 \text{ mod }2 \text{ and } c\equiv 2 \text{ mod 4 } \}.
\end{align*}

\begin{proposition} \label{case 4}
The matrix part of the normalizer of $\pi$ in $\A(4)$ for the $4$-dimen\-sional closed flat manifolds with holonomy $\mathbb{Z}_2 ^2$, with one $2$-dimensional isotypic component of real type and two $1$-dimensional isotypic components of real type, is as follows:
\[
\renewcommand{\arraystretch}{1.8}
\begin{array}{cc} 
\mathbf{N^4_{22}}  &  \mathcal{N}_{\pi}= \left\{ \begin{psmallmatrix}
X  & 0 & 0   \\
0 & \pm 1 & 0 \\
0 &  0  &  \pm 1  \\   \end{psmallmatrix} \mid  X \in \G(2,\Z)  \right\}  , \\
\mathbf{N^4_{8}}, \mathbf{N^4_{9}}, \mathbf{N^4_{23}}, \mathbf{N^4_{24}}, \mathbf{N^4_{25}}, \mathbf{N^4_{26}}    & \mathcal{N}_{\pi}= \left\{ \begin{psmallmatrix}
X  & 0 & 0   \\
0 & \pm 1 & 0 \\
0 &  0  &  \pm 1  \\   \end{psmallmatrix} \mid X \in \Gamma_0(2)^{t}  \right\} , \\
\mathbf{N^4_{3}}, \mathbf{N^4_{5}}, \mathbf{N^4_{6}} & \begin{array}{c}
\mathcal{N}_{\pi}= \left\langle \begin{psmallmatrix}
X_1  & 0 & 0   \\
0 & \pm 1 & 0 \\
0 &  0  &  \pm 1  \\   \end{psmallmatrix}, \begin{psmallmatrix}
X_2  & 0 & 0   \\
0 &  0  &   1  \\
0 &  1 & 0 \\   \end{psmallmatrix} \mid X_1 \in \Gamma(2), \right. \\
\left. \; X_2 \in \Gamma(2)\cdot \begin{psmallmatrix}
0  &   1  \\
1 & 0 \\   \end{psmallmatrix}  \right\rangle , \end{array}  \\ 
\mathbf{N^4_{4}},  \mathbf{N^4_{10}}, \mathbf{N^4_{11}}, \mathbf{N^4_{12}}, \mathbf{N^4_{13}} &  \mathcal{N}_{\pi}= \left\{ \begin{psmallmatrix}
X  & 0 & 0   \\
0 & \pm 1 & 0 \\
0 &  0  &  \pm 1  \\   \end{psmallmatrix} \mid X \in \Gamma(2)  \right\} , \\
 \mathbf{N^4_{7}}  & \begin{array}{c}
\mathcal{N}_{\pi}= \left\langle \begin{psmallmatrix}
X_1  & 0 & 0   \\
0 & \pm 1 & 0 \\
0 &  0  &  \pm 1  \\   \end{psmallmatrix}, \begin{psmallmatrix}
X_2  & 0 & 0   \\
0 &  0  &   1  \\
0 &  1 & 0 \\   \end{psmallmatrix} \mid X_1 \in \Gamma_0(4), \right. \\
\left. \; X_2 \in \Gamma_2(4) \right\rangle , \end{array}  \\ 
\mathbf{N^4_{44}}  & \mathcal{N}_{\pi}= \left\langle \begin{psmallmatrix}
X_1  & 0 & 0   \\
0 & \pm 1 & 0 \\
0 &  0  &  \pm 1  \\   \end{psmallmatrix}, \begin{psmallmatrix}
\Id & 0 & 0   \\
0  &  0 & 1 \\ 
0 &   1  & 0  \end{psmallmatrix} \mid X_1 \in \Gamma_0(2)^{t} \right\rangle . \\
\end{array}
\]

\end{proposition}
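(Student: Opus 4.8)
The plan is to determine, for each Bieberbach group $\pi$ in Table \ref{table:4}, all affine maps $(P,x)\in\A(4)$ normalizing $\pi$, and then read off $\mathcal{N}_\pi=\tau(\mathrm N_{\A(4)}(\pi))$ as the set of matrix parts $P$ that occur; I would organize this into three stages. \emph{First, the block form of a normalizing matrix.} Conjugation by $(P,x)$ must carry $H_\pi$ onto itself, so $P$ normalizes $H_\pi$ in $\G(4,\R)$. By Lemma \ref{diag} the matrices of $H_\pi\cong\Z_2^2$ are diagonal with entries $\pm1$ (and already so in Lambert's list), and by Proposition \ref{teich}(3) the orthogonal $H_\pi$-representation on $\R^4$ splits into a $2$-dimensional $\R$-type isotypic component $W$ together with two inequivalent $1$-dimensional $\R$-type components, which after reordering I call $\R e_3$ and $\R e_4$. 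Hence $P$ preserves $W$ and either fixes each line or swaps them, so $P=\begin{psmallmatrix}X&0\\0&D\end{psmallmatrix}$ with $X\in\G(2,\R)$ acting on $W$ and $D=\mathrm{diag}(\pm1,\pm1)$ or $D=\begin{psmallmatrix}0&\pm1\\\pm1&0\end{psmallmatrix}$. Applying Lemma \ref{notinv} to the elements of $H_\pi$ with one versus two eigenvalues $-1$ shows the anti-diagonal option for $D$ can occur only when the coordinate swap of $e_3,e_4$ induces an automorphism of $H_\pi$; this is automatic when $H_\pi$ acts trivially on $W$ but fails for $N^4_{22},\dots,N^4_{26}$, where the swap is thereby excluded at once.

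\emph{Second, the lattice condition.} The translation lattice $L\le\R^4$ of $\pi$ (generated by the $t_i$ and $s_i$ of Table \ref{table:4}) must satisfy $PL=L$. For the manifolds with the standard lattice $\Z^4$ this already forces $X\in\G(2,\Z)$ and $D$ integral. For the remaining manifolds I would compute $L$ explicitly --- for instance $L=\Z^4+\Z\,\tfrac12(e_1+e_3)$ for $N^4_{10}$, and $L=\Z^4+\Z\,\tfrac12(e_1+e_3)+\Z\,\tfrac12(e_1+e_4)$ for $N^4_7$ --- and, as in \cite{karla}, translate $PL=L$ into congruence conditions on the entries of $X$.

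\emph{Third, the translational part.} Writing the two nontranslational generators as $\alpha=(A,a)$ and $\beta=(B,b)$, one has
\[
(P,x)(A,a)(P,x)^{-1}=\bigl(PAP^{-1},\ Pa+(\Id-PAP^{-1})x\bigr),
\]
and likewise for $\beta$. The matrix part $PAP^{-1}$, which is read off from $D$, identifies which element of $\pi$ --- i.e.\ which of $\alpha,\beta,\alpha\beta$, up to a lattice translation and up to the sign choices in $D$ --- the conjugate must equal. Comparing translation parts modulo $L$: on the coordinates where $PAP^{-1}$ has eigenvalue $-1$ the term $(\Id-PAP^{-1})x$ equals $2x_i$ and absorbs any discrepancy, while on the coordinates with eigenvalue $+1$ --- which contains all of $W$ whenever $H_\pi|_W$ is trivial --- one obtains rigid congruences $(Pa)_i\equiv a'_i\pmod L$. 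The $W$-congruences cut $X$ down from $\G(2,\Z)$ to $\Gamma_0(2)^t$ or $\Gamma(2)$ (or leave it equal to $\G(2,\Z)$ when, as for $N^4_{22}$, the $W$-components of $a,b$ vanish and $H_\pi|_W=\pm\Id$ absorbs everything); for the manifolds still admitting the swap they pin the $X$-block of the swap element to a coset $\Gamma(2)\cdot\begin{psmallmatrix}0&1\\1&0\end{psmallmatrix}$ (resp.\ to $\Gamma_2(4)$), whereas the remaining line-coordinate congruences, together with the lattice condition, decide the admissible signs in $D$ and whether the swap survives at all. Assembling the surviving matrices yields the stated descriptions; for $N^4_3,N^4_5,N^4_6$, $N^4_7$ and $N^4_{44}$ the swap element (the generator containing the $X_2$-block) is present, so $\mathcal{N}_\pi$ is the group generated by the block-diagonal matrices together with this one extra element, and does not split as a direct product.

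The bulk of the work --- and the only genuine difficulty --- is carrying out the third stage manifold by manifold: computing the non-standard lattices $L$, and, for each candidate swap, checking whether the relevant coset of congruence matrices is nonempty and whether the off-diagonal line coordinate forces an obstruction (this is precisely what distinguishes $N^4_3$ from $N^4_4$, and $N^4_7$ from $N^4_{10},\dots,N^4_{13}$).
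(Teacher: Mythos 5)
Your three--stage scheme --- block form forced by the isotypic decomposition and by normalizing $H_\pi$, preservation of the translation lattice, and matching the translation parts of the conjugated generators modulo $L$ via $(P,x)(A,a)(P,x)^{-1}=(PAP^{-1},Pa+(\Id-PAP^{-1})x)$ --- is exactly the method of the paper (the same steps spelled out in the proof of Proposition \ref{case 1}, here organized by the number of fixed entries), and like the paper you leave the manifold-by-manifold verification to the reader.

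There is, however, one step of your outline that fails as literally stated, and it sits precisely where the congruence subgroups $\Gamma_0(2)^t$, $\Gamma(2)$, $\Gamma_0(4)$ are supposed to emerge. You decompose the condition $Pa+(\Id-PAP^{-1})x\equiv a'\pmod L$ coordinate by coordinate: rigid congruences on the $+1$-eigencoordinates, free absorption by $2x_i$ on the $-1$-eigencoordinates. That decomposition is only valid when $L$ splits as a product of a lattice in $W$ with lattices in the two lines. For several manifolds of Table \ref{table:4} it does not: for $N^4_{10}$ you correctly record $L=\Z^4+\Z\,\tfrac12(e_1+e_3)$, which couples the $W$-coordinate $e_1$ to the line coordinate $e_3$. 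The freedom in $x_3$ (available for a generator acting by $-1$ on $e_3$, such as $\beta$) can then be traded through the mixed lattice vector $\tfrac12(e_1+e_3)$ to relax what your scheme labels a rigid congruence on the $e_1$-coordinate. Concretely, take $P=\begin{psmallmatrix} T&0\\0&\Id\end{psmallmatrix}$ with $T=\begin{psmallmatrix}1&1\\0&1\end{psmallmatrix}$ and $x=\tfrac14 e_3$: one checks that $PL=L$, that $(P,x)\alpha(P,x)^{-1}=\alpha$, and that $(P,x)\beta(P,x)^{-1}=(B,e_1+\tfrac12e_2+\tfrac12e_3)=\beta\,s_1t_3^{-1}\in\pi$, even though $T$ fails the coordinate-wise $W$-congruence and indeed $T\notin\Gamma(2)$. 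So the coordinate-wise shortcut does not reproduce the stated answer for this family; you must solve the mod-$L$ condition for all generators simultaneously as a single system in $x$, taking the non-product structure of $L$ into account. This is exactly the delicate point that is supposed to distinguish $N^4_3$ from $N^4_4$ and to produce $\Gamma_0(4)$ for $N^4_7$, so your write-up needs to make that simultaneous analysis explicit (and, for the non-split-lattice cases, to reconcile it with the congruence classes claimed in the statement).
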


\begin{proof}
These cases correspond to table \ref{table:4}. We classify them according to the number of entries with trivial action, i.e., the number of fixed entries.
 
First, we study the cases with one fixed entry. These are the manifolds $\mathbf{N^4_{i}}$ with $i \in \{ 22,\dots,26\}$. We have one fixed entry and the number of $-1$ appearing in the diagonal of the generators are different, then there are no matrices that permute the generators in the normalizer. The matrices fixing each generator have the form 
\begin{equation} \label{form}
\begin{psmallmatrix}
X  & 0 & 0   \\
0 & \pm 1 & 0 \\
0 &  0  &  \pm 1  \\   \end{psmallmatrix} \quad \text{with} \quad X \in \G(2,\Z),
\end{equation}
because we have a $2$-dimensional isotypic component of $\R$ type. We look for the matrices $X$ that preserve the corresponding lattices of the generators, following the method developed in \cite{karla}. For $\mathbf{N^4_{22}}$, we get that there is no restriction for $X \in \G(2,\Z)$, but for $\mathbf{N^4_{23}}$, $\mathbf{N^4_{24}}$, $\mathbf{N^4_{25}}$, and $\mathbf{N^4_{26}}$ we get that $ X\in \Gamma_0(2)^{t} $.    

The second case is the one with two fixed entries. These are the manifolds $\mathbf{N^4_{i}}$ with $i \in \{ 3,\dots,13\}$. The only possible permutation is between $\mathrm{diag}(1,1,1,-1)$ and $\mathrm{diag}(1,1,-1,1)$. The matrices fixing the generators have the same form as in \eqref{form}, and the ones that switch them have the form   
\[
\begin{psmallmatrix}
X  & 0 & 0   \\
0 &  0  &  \pm 1  \\
0 & \pm 1 & 0 \\   \end{psmallmatrix} \quad \text{with} \quad X \in \G(2,\Z).
\]  
Depending on the translation part of the generators we obtain the different results. 

The last case is $\mathbf{N^4_{44}}$, which does not have fixed entries. It admits the permutation $P$ between $\mathrm{diag}(-1,-1,-1,1)$ and $\mathrm{diag}(-1,-1,1,-1)$, where $(P,\frac{1}{4}) \in \mathrm{N}_{\A(n)}(\pi) $.  
\end{proof}

\section{Manifolds with one isotypic component of complex type}\label{sec:isotypic1complex}

The remaining $4$-dimensional closed flat manifolds with two generators in their holonomy have one $1$-dimensional isotypic component of $\C$ type and two $1$-dimensional isotypic components of $\R$ type. In order to enumerate them, we consider the following notation. Denote the rotation matrix by an angle $\theta \in [0, 2\pi]$ as
$$ R(\theta)= \left( \begin{array}{cc}
\cos \theta & -\sin \theta    \\ 
\sin \theta & \cos \theta      \end{array} \right),
$$
and denote the reflection as $E_0 = \mathrm{diag}(1,-1)$. 
We classify the manifolds according to their holonomy.

%\clearpage

\begin{table}  
\begin{tabular}{|l|c|c|}
\hline
$\mathbf{O^4_{18}}$ &    $\pi = \langle t_1, t_2, t_3, s_1, \alpha, \beta \rangle $       &  $\alpha  = \left( A= \begin{psmallmatrix}
\Id & 0  \\
 0 & R(\frac{4\uppi}{3})
  \end{psmallmatrix} , \frac{1}{3}e_2 \right) $\\ 
      &  $s_1=(\Id, -A(e_3))$ &  $\beta= \left( B= \begin{psmallmatrix}
E_0 & 0  \\
 0 & R(\frac{4\uppi}{3})E_0
  \end{psmallmatrix} , \frac{1}{2}e_1 + \frac{1}{3}e_2 \right)$  \\
 \hline
$\mathbf{O^4_{19}}$ &    $\pi = \langle t_1, t_2, t_3, s_1, \alpha, \beta \rangle $       &  $\alpha  = \left( A= \begin{psmallmatrix}
\Id & 0  \\
 0 & R(\frac{4\uppi}{3})
  \end{psmallmatrix} , \frac{1}{3}e_1 \right) $\\ 
      &  $s_1=(\Id, -A(e_3))$ &  $\beta= \left( B= \begin{psmallmatrix}
-E_0 & 0  \\
 0 & R(\frac{4\uppi}{3})(-E_0)
  \end{psmallmatrix} , \frac{1}{3}e_1 + \frac{1}{2}e_2 \right)$  \\
 \hline
 $\mathbf{O^4_{20}}$ &    $\pi = \langle t_1, s_1, s_2, s_3, \alpha, \beta \rangle $       &  $\alpha  = \left( A= \begin{psmallmatrix}
\Id & 0  \\
 0 & R(\frac{4\uppi}{3})
  \end{psmallmatrix} , \frac{1}{3}e_1 + \frac{1}{2}e_3 - \frac{1}{2\sqrt{3}}e_4 \right) $\\ 
      &  $s_1=(\Id, v=\frac{1}{3}e_2 + \frac{2}{3}e_3)$ &  $\beta= \left( B= \begin{psmallmatrix}
-E_0 & 0  \\
 0 & E_0
  \end{psmallmatrix} , \frac{1}{3}e_1 + \frac{1}{6}e_2 - \frac{2}{3}e_3 - \frac{1}{\sqrt{3}}e_4\right)$  \\
    &  $s_2 =(\Id , A^2(v))$  &     \\
    &  $s_3 =(\Id , A(v))$   &    \\
 \hline
\end{tabular}
\caption{Bieberbach groups with holonomy $D_3$. See page 45 in \cite{lambert}.}
\label{table:5}
\end{table}

\begin{remark}
We conjugate the case of $O^4_{20}$, in table \ref{table:5}, by the affine transformation 
\[
\left( P= \begin{psmallmatrix}
1 & 0 & 0 & 0  \\
0 & \frac{1}{3} & \frac{1}{3} & \frac{1}{3} \\
0 & \frac{2}{3} & -\frac{1}{3} & -\frac{1}{3} \\
0 & 0      & \frac{1}{\sqrt{3}} & -\frac{1}{\sqrt{3}}
  \end{psmallmatrix},0 \right).   
\]
\end{remark}

\begin{table}
\noindent \begin{tabular}{|l|c|c|}
\hline
$\mathbf{O^4_{21}}$ &    $\pi = \langle t_1, t_2, t_3, t_4, \alpha, \beta \rangle $       &  $\alpha  = \left( A= \begin{psmallmatrix}
\Id & 0  \\
 0 & R(\frac{3\uppi}{2})
  \end{psmallmatrix} , \frac{1}{4}e_1 \right) $\\ 
      &        &  $\beta= \left( B= \begin{psmallmatrix}
-E_0 & 0  \\
 0 & R(\frac{3\uppi}{2})(-E_0)
  \end{psmallmatrix} ,  \frac{1}{2}e_2 \right)$  \\
 \hline
$\mathbf{O^4_{22}}$ &    $\pi = \langle t_1, t_2, t_3, t_4, \alpha, \beta \rangle $       &  $\alpha  = \left( A= \begin{psmallmatrix}
\Id & 0  \\
 0 & R(\frac{3\uppi}{2})
  \end{psmallmatrix} , \frac{1}{4}e_1 + \frac{1}{2}e_3 \right) $\\ 
      &        &  $\beta= \left( B= \begin{psmallmatrix}
-E_0 & 0  \\
 0 & R(\frac{3\uppi}{2})(-E_0)
  \end{psmallmatrix} ,  \frac{1}{2}e_2 \right)$  \\
 \hline
$\mathbf{O^4_{23}}$ &    $\pi = \langle t_1, t_2, t_3, t_4, \alpha, \beta \rangle $       &  $\alpha  = \left( A= \begin{psmallmatrix}
\Id & 0  \\
 0 & R(\frac{3\uppi}{2})
  \end{psmallmatrix} , \frac{1}{4}e_1 + \frac{1}{2}e_2 + \frac{1}{2}e_3 \right) $\\ 
      &        &  $\beta= \left( B= \begin{psmallmatrix}
-E_0 & 0  \\
 0 & R(\frac{3\uppi}{2})(-E_0)
  \end{psmallmatrix} ,  \frac{1}{2}e_2 \right)$  \\
 \hline 
$\mathbf{O^4_{24}}$ &    $\pi = \langle s_1, s_2, s_3, t_4, \alpha, \beta \rangle $       &  $\alpha  = \left( A= \begin{psmallmatrix}
R(\frac{\uppi}{2}) & 0  \\
 0 & \Id
  \end{psmallmatrix} , \frac{1}{4}(e_1 + e_2 + e_3 + e_4) \right) $\\ 
      & $s_1=(\Id, \frac{1}{2}(-e_1-e_2+e_3))$   &  $\beta= \left( B= \begin{psmallmatrix}
E_0 & 0  \\
 0 & E_0
  \end{psmallmatrix} ,  \frac{1}{2}e_3 \right)$  \\
     &  $s_2=(\Id, \frac{1}{2}(e_1-e_2 - e_3))$  &       \\
     & $s_3=(\Id, \frac{1}{2}(e_1 + e_2 +e_3))$   &       \\
 \hline 
$\mathbf{N^4_{40}}$ &    $\pi = \langle t_1, t_2, s_1, s_2, \alpha, \beta \rangle $       &  $\alpha  = \left( A= \begin{psmallmatrix}
E_0 & 0  \\
 0 & R(\frac{3\uppi}{2})
  \end{psmallmatrix} , \frac{1}{4}(e_1 + e_2 + e_3 - e_4) \right) $\\ 
      &  $s_1=(\Id, \frac{1}{2}(e_1+e_2 +e_3 -e_4))$     &  $\beta= \left( B= \begin{psmallmatrix}
E_0 & 0  \\
 0 & E_0
  \end{psmallmatrix} ,  \frac{1}{2}(e_2 - e_3 -e_4) \right)$  \\
     &  $s_2=(\Id, \frac{1}{2}(e_1+e_2 + e_3 +e_4))$  &        \\
 \hline 
$\mathbf{N^4_{41}}$ &    $\pi = \langle t_1, t_2, t_3, t_4, \alpha, \beta \rangle $       &  $\alpha  = \left( A= \begin{psmallmatrix}
0  &  1  & 0  \\
1  &  0  & 0  \\
0  &  0 & R(\frac{3\uppi}{2})
  \end{psmallmatrix} , \frac{1}{2}(e_2 + e_3) \right) $\\ 
      &       &  $\beta= \left( B= \begin{psmallmatrix}
0  &  1  & 0  \\
1  &  0  & 0  \\
0  &  0 & -E_0
  \end{psmallmatrix} ,  \frac{1}{2}(e_4) \right)$  \\
 \hline 
$\mathbf{N^4_{47}}$ &    $\pi = \langle t_1, t_2, t_3, t_4, \alpha, \beta \rangle $       &  $\alpha  = \left( A= \begin{psmallmatrix}
-E_0 & 0  \\
 0 & R(\frac{3\uppi}{2})
  \end{psmallmatrix} , \frac{1}{4}e_2 + \frac{1}{2}e_4 \right) $\\ 
      &        &  $\beta= \left( B= \begin{psmallmatrix}
E_0 & 0  \\
 0 & R(\frac{3\uppi}{2})E_0
  \end{psmallmatrix} ,  \frac{1}{2}e_1 \right)$  \\
 \hline 
\end{tabular}
\caption{Bieberbach groups with holonomy $D_4$. See pages 46, 47, 65, 66, and 69 in \cite{lambert}.}
\label{table:6}
\end{table}

%\clearpage

\begin{table}
\begin{tabular}{|l|c|c|}
\hline
$\mathbf{O^4_{25}}$ &    $\pi = \langle t_1, t_2, t_3, s_1, \alpha, \beta \rangle $       &  $\alpha  = \left( A= \begin{psmallmatrix}
\Id & 0  \\
 0 & R(\frac{\uppi}{3})
  \end{psmallmatrix} , \frac{1}{6}e_2 \right) $\\ 
      &  $s_1 = ( \Id , A(e_3) )$  &  $\beta= \left( B= \begin{psmallmatrix}
E_0 & 0  \\
 0 & E_0
  \end{psmallmatrix} , \frac{1}{2}(e_1 - e_2) \right)$  \\
 \hline
\end{tabular}
\caption{Bieberbach groups with holonomy $D_6$. See page 47 in \cite{lambert}.}
\label{table:7}
\end{table}

\begin{table}
\begin{tabular}{|l|c|c|}
\hline
$\mathbf{N^4_{27}}$ &    $\pi = \langle t_1, t_2, t_3, t_4, \alpha, \beta \rangle $       &  $\alpha  = \left( A= \begin{psmallmatrix}
\Id & 0  \\
 0 & R(\frac{\uppi}{2})
  \end{psmallmatrix} , \frac{1}{4}e_2 - \frac{1}{2}(e_3 + e_4) \right) $\\ 
      &        &  $\beta= \left( B= \begin{psmallmatrix}
-E_0 & 0  \\
 0 & \Id
  \end{psmallmatrix} ,  \frac{1}{2}(e_3 + e_4) \right)$  \\
 \hline
$\mathbf{N^4_{28}}$ &    $\pi = \langle t_1, t_2, t_3, t_4, \alpha, \beta \rangle $       &  $\alpha  = \left( A= \begin{psmallmatrix}
\Id & 0  \\
 0 & R(\frac{\uppi}{2})
  \end{psmallmatrix} , -\frac{1}{2}e_1 + \frac{1}{4}e_2 + \frac{1}{2}(e_3 + e_4) \right) $\\ 
      &        &  $\beta= \left( B= \begin{psmallmatrix}
-E_0 & 0  \\
 0 & \Id
  \end{psmallmatrix} ,  \frac{1}{2}(e_1 + e_3 + e_4) \right)$  \\
 \hline
$\mathbf{N^4_{29}}$ &    $\pi = \langle s_1, s_2, s_3, t_4, \alpha, \beta \rangle $       &  $\alpha  = \left( A= \begin{psmallmatrix}
R(\frac{3\uppi}{2}) & 0  \\
 0 & \Id
  \end{psmallmatrix} , \frac{1}{4}(-e_1 - e_2 - e_3 + e_4) \right) $\\ 
      &  $s_1=(\Id, \frac{1}{2}(-e_1-e_2+e_3))$   &  $\beta= \left( B= \begin{psmallmatrix}
\Id & 0  \\
 0 & -E_0
  \end{psmallmatrix} ,  -\frac{1}{2}e_2  \right)$  \\
    & $s_2=(\Id, \frac{1}{2}(e_1 -e_2 -e_3))$ &      \\
    & $s_3=(\Id, \frac{1}{2}(e_1+e_2+e_3))$  &      \\
 \hline
\end{tabular}
\caption{Bieberbach groups with holonomy $\Z_4 \times \Z_2 $. See pages 59 and 60 in \cite{lambert}.}
\label{table:8}
\end{table}

\begin{table}
\begin{tabular}{|l|c|c|}
\hline
$\mathbf{N^4_{42}}$ &    $\pi = \langle t_1, t_2, t_3, s_1, \alpha, \beta \rangle $       &  $\alpha  = \left( A= \begin{psmallmatrix}
-E_0 & 0  \\
 0 & R(\frac{4\uppi}{3})
  \end{psmallmatrix} , \frac{1}{6}e_2 \right) $\\ 
      &  $s_1 = ( \Id , -A(e_3) )$  &  $\beta= \left( B= \begin{psmallmatrix}
\Id & 0  \\
 0 & -\Id
  \end{psmallmatrix} , \frac{1}{2}e_1  \right)$  \\
 \hline
\end{tabular}
\caption{Bieberbach groups with holonomy $\Z_6 \times \Z_2$. See page 66 in \cite{lambert}.}
\label{table:9}
\end{table}

In order to study the moduli space of flat metrics for these manifolds, we prove that the action of $\mathcal{N}_{\pi}$ splits into the factors of the Teichm\"uller space. First we prove that $\mathcal{N}_{\pi}$ can be separated as a product. 
%\textcolor{blue}{See if I can do a nicer proof for this with Bettiol notation.}

\begin{remark} \label{shape}
We are studying a double quotient   
\begin{multline*}
\ort(4) \backslash \{ G \in \G(4, \R) \mid G H_{\pi} G^{-1} \subset \ort(4) \} / \mathcal{N}_{\pi} \cong  \\  \left( \ort(1) \backslash \G(1, \R) \times \ort(1) \backslash \G(1, \R) \times \mathrm{U}(1) \backslash \G(1, \C) \right) / \mathcal{N}_{\pi} .
\end{multline*}
Then we have the action of $\mathcal{N}_{\pi}$ on matrices of the form 
\[
G = \begin{psmallmatrix}
a  &  0  & 0 & 0 \\
0  &  b  & 0 & 0 \\
0  &  0 & c  & -d \\
0  &  0 & d  & c
  \end{psmallmatrix} \in \G(1, \R) \times \G(1, \R) \times \G(1, \C),   
\]
since for any $X \in \mathcal{N}_{\pi}$, we have that there exists a corresponding $ O \in \ort(4)$ such that $OGX \in \G(1, \R) \times \G(1, \R) \times \G(1, \C) $.
\end{remark}

The $4$-dimensional closed flat manifolds we are considering in this section have holonomy $H_{\pi} =  \langle A, B \rangle  $, where a corresponding rotation $R(\theta)$ is in $A$, with $R(\theta)^k = \Id$ for $k=3, 4, 6$. The only case where the order of $A$ is different from that of the corresponding rotation is $N^4_{42}$.
 
\begin{lemma} \label{separate}
Let $\pi$ be a Bieberbach group for the $4$-dimensional closed flat manifolds with one $1$-dimensional isotypic component of complex type and two generators in their holonomy, $H_{\pi}= \langle A , B \rangle$, where $A$ is of order $k=3, 4,$ or $6$, and $B$ is of order $2$. Each $X \in \mathrm{N}_{\G(4,\R)}(H_{\pi})$ has the form $X= \begin{psmallmatrix}
X_1 & 0    \\
0  &  X_2 \\  \end{psmallmatrix}$ with $X_1 , X_2 \in \G(2, \R)$.  
\end{lemma}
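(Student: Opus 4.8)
The plan is to exploit the block structure of $H_\pi$ relative to the decomposition $\mathbb R^4 = \mathbb R^2 \oplus \mathbb R^2$, where the first $\mathbb R^2$ carries the two $1$-dimensional $\mathbb R$-type isotypic components and the second $\mathbb R^2$ carries the $1$-dimensional $\mathbb C$-type component. On the first summand every element of $H_\pi$ acts by a diagonal matrix with entries $\pm 1$ (this is visible in Tables \ref{table:5}--\ref{table:9}, up to the conjugation indicated in the remarks, with the one block permutation in $N^4_{41}$ handled separately), and on the second summand the generator $A$ acts by the rotation $R(\theta)$ with $R(\theta)^k=\Id$ for $k=3,4,6$. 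The key algebraic fact is that $R(\theta)$, viewed as an element of $\G(2,\mathbb R)$ acting on the second $\mathbb R^2$, generates a cyclic group whose centralizer in $\G(4,\mathbb R)$ is controlled: a rotation of order $\geq 3$ has no real eigenvalues, so its $1$-eigenspace is exactly the first $\mathbb R^2$ and its complement is exactly the second $\mathbb R^2$.

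First I would argue that any $X \in \mathrm N_{\G(4,\mathbb R)}(H_\pi)$ must preserve the subspace $V = \ker(A-\Id) = \mathbb R^2 \oplus 0$. Indeed $A$ has order $k \geq 3$, so its eigenvalues are $1,1,e^{\pm 2\pi i/k'}$ for the appropriate primitive root, and $V$ is intrinsically characterized as the sum of the eigenspaces of $A$ for real eigenvalues — equivalently, $V = \{v : (A-\Id)(A + \Id \text{ or suitable polynomial}) v = 0\}$, but cleanest is: $V$ is the unique maximal subspace on which $A$ acts with finite order dividing $1$, i.e. trivially. Since $X A X^{-1} \in H_\pi$ and — here is the point — $XAX^{-1}$ must again have the same $1$-eigenspace up to the constraint that it lies in $H_\pi$; more directly, conjugation by $X$ sends $\ker(A-\Id)$ to $\ker(XAX^{-1}-\Id)$, and one checks from the explicit holonomy groups that the only elements of $H_\pi$ whose fixed subspace has dimension $2$ and which are conjugate (over $\G(4,\mathbb R)$, hence have the same characteristic polynomial, by the remark after Lemma \ref{notinv}) to $A$ are the powers of $A$ that generate the same rotation part, all of which share the fixed subspace $V$. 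Hence $X(V) = V$.

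Next I would show $X$ also preserves the complementary subspace $W = 0 \oplus \mathbb R^2$. The clean way is to use that $A$ acts on $W$ as a rotation of order $\geq 3$, so $W = \mathrm{Im}(A - \Id)$ (since $A-\Id$ is invertible on $W$ and zero on $V$), and $\mathrm{Im}(A-\Id)$ is preserved by conjugation: $X(\mathrm{Im}(A-\Id)) = \mathrm{Im}(XAX^{-1} - \Id) = \mathrm{Im}(A-\Id)$ because $XAX^{-1}$ is a power of $A$ with the same rotation block, as established above. Therefore $X$ preserves both $V$ and $W$, and since these are complementary coordinate $\mathbb R^2$'s, $X$ is block diagonal: $X = \begin{psmallmatrix} X_1 & 0 \\ 0 & X_2 \end{psmallmatrix}$ with $X_1, X_2 \in \G(2,\mathbb R)$.

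The main obstacle, and where care is needed, is the bookkeeping: one must confirm case by case (for the holonomy groups $D_3, D_4, D_6, \mathbb Z_4\times\mathbb Z_2, \mathbb Z_6\times\mathbb Z_2$ in Tables \ref{table:5}--\ref{table:9}) that no element of $H_\pi$ with a $2$-dimensional fixed subspace is conjugate in $\G(4,\mathbb R)$ to $A$ unless it has the same rotation block on $W$ — equivalently that the characteristic polynomial of $A$ is not shared by any ``exotic'' element of $H_\pi$ that mixes the two summands. The potential trouble cases are $N^4_{41}$, where the matrix of $A$ contains the off-diagonal block $\begin{psmallmatrix}0&1\\1&0\end{psmallmatrix}$ acting on the first $\mathbb R^2$ rather than a diagonal matrix, and $N^4_{42}$, flagged in the text as the case where $A$ has order different from its rotation part; in the former one notes the permutation block still has finite order and real eigenvalues $\pm 1$, so it does not affect $\ker(A^m - \Id)$ for a suitable power $m$ killing the permutation, and the argument above goes through after replacing $A$ by $A^2$; in the latter one runs the same argument with the power of $A$ whose rotation block still has order $\geq 3$. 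Once these verifications are in place the block decomposition of $X$ follows, completing the proof.
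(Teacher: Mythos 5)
Your argument is correct, but it is structured quite differently from the paper's. The paper argues by contradiction: it assumes $X$ does not preserve the splitting, argues that $X$ must then carry $V_1\oplus V_2$ entirely into $V_3$ (so $X$ is anti-block-diagonal), and derives the contradiction $E=X_2^{-1}R(\theta)X_2$ with $E$ of order at most $2$ and $R(\theta)$ of order at least $3$. You instead give a direct proof: the two coordinate planes are exhibited as $\ker(A^m-\Id)$ and $\mathrm{Im}(A^m-\Id)$ for a suitable power $A^m$, and these are characteristic subspaces because $XA^mX^{-1}$ is forced (by order and characteristic polynomial, checked against the explicit groups $D_3$, $D_4$, $D_6$, $\Z_4\times\Z_2$, $\Z_6\times\Z_2$) to be a power of $A$ with the same kernel and image. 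Both proofs ultimately rest on the same fact --- the rotation block has order $\geq 3$ while the action on the real-type summand has order $\leq 2$ --- but your route avoids the paper's least rigorous step, namely the claim that a non-block-diagonal normalizing element must swap the two planes outright; in exchange you take on a case check that the paper does not need. One bookkeeping remark: the cases where $\ker(A-\Id)$ fails to be $2$-dimensional and you must pass to a power of $A$ are not only $N^4_{41}$ and $N^4_{42}$ but also $N^4_{40}$ and $N^4_{47}$ (there $A$ restricts to $E_0$ or $-E_0$ on the real-type plane); the identical fix --- replace $A$ by $A^2$, and use that the only order-$4$ elements of the relevant $D_4$ are $A^{\pm 1}$, so that $XA^2X^{-1}=A^2$ --- applies verbatim, so this does not affect the validity of the argument.
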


%\textcolor{red}{En la notación del lema aparece $N_{\G(4,\R)}(H_{\pi})$. Ya no supe si iba en roman $\mathrm{N}$ o en caligráfica  $\mathcal{N}$. Cierto! es con $\mathrm{N}$, de normalizador }

\begin{proof}
The decomposition of the orthogonal $H_{\pi}$-representation into irreducible subspaces is $\R^4 = V_1 \oplus V_2 \oplus V_3 $, where End$_{H_{\pi}}(V_1) \cong \R \cong$ End$_{H_{\pi}}(V_2)$ and End$_{H_{\pi}}(V_3) \cong \C$. The action of $\mathrm{N}_{\G(4,\R)}(H_{\pi})$ is as in remark \ref{shape}. Proceeding by contradiction, suppose there exists $X \in \mathrm{N}_{\G(4,\R)}(H_{\pi})$ such that $X \neq \begin{psmallmatrix}
X_1 & 0    \\
0  &  X_2 \\  \end{psmallmatrix}$ with $X_1 , X_2 \in \G(2, \R)$, then $X$ must mix the $\R$ type components with the $\C$ type components, which means that the subspaces $V_i$, $i\in \{1,2,3\}$, are not invariant under the action of $X$. 
%Let us see what happens with these subspaces when we apply $X$.
Then there exists $p \in V_1 \oplus V_2$, $p\ne 0$, such that $X(p) \in V_3$ and $XA(p) = \tilde{A}X(p) \in V_3$ for some $A, \tilde{A} \in H_{\pi}$. We know that the action of $ H_{\pi}$ on $V_3$ is by rotations, $R(\theta)$, then $\{ X(p), R(\theta)X(p) \}$ forms a basis in $V_3$. By abuse of notation, we have  $R(\theta )(X(p)) = X(\hat{A}(p))$, for some $\hat{A} \in H_{\pi} $, where we restrict the action of the matrices to the corresponding components. We  conclude that $\{ p, \hat{A}(p) \}$ is a basis for $V_1 \times V_2$ and then the restriction of $X$ to $V_1 \times V_2$ takes values in $V_3$.
% $X\colon V_1 \times V_2 \rightarrow V_3$. \textcolor{red}{Que se quiere decir? que X toma valores en $V_3$? la restricción de X toma valores en $V_3$ }
In order for this to happen, we have that
\[
X = \begin{psmallmatrix}
0 & X_1    \\
X_2  & 0  \\  \end{psmallmatrix} \quad \text{with} \quad X_1 , X_2 \in \G(2, \R),
\]              
but from the property of $XA(x,y,0,0)^{t} = \tilde{A}X(x,y,0,0)^{t}$, we get that $X_2 E (x,y)^{t} = R(\theta)X_2 (x,y)^{t}$, for $E \in \{ \begin{psmallmatrix}
\pm 1 & 0  \\
 0 & \pm 1
  \end{psmallmatrix} \}$, and then $E=X_2^{-1} R(\theta) X_2$, but this is a contradiction because conjugation preserves the order. 
\end{proof}

In the following proposition, we study the matrix part of the normalizer $\mathcal{N}_{\pi}$ for the $4$-dimensional closed flat manifolds with one $1$-dimensional isotypic component of $\C$ type. 

\begin{proposition} \label{normalizer}
Let $\pi$ be a Bieberbach group for the $4$-dimensional closed flat manifolds with two generators in their holonomy, $H_{\pi}= \langle A , B \rangle$, where  $A$ is of order $k=3,4$ or $6$, and $B$ of order $2$. The matrix part of the normalizer of $\pi$ in $\A(4)$ is either 
\[
\mathcal{N}_{\pi} = \left\langle \begin{psmallmatrix}
\pm 1 & 0 & 0    \\
  0  & 1  & 0    \\ 
  0  & 0  & R(\theta) \\  \end{psmallmatrix}, \begin{psmallmatrix}
\pm 1 & 0 & 0    \\
  0  & -1 & 0   \\
  0  &  0 & R(\theta)E_0 \\  \end{psmallmatrix} \right\rangle \quad \text{or} \quad  \left\langle \begin{psmallmatrix}
  1 & 0 & 0    \\
  0  & \pm 1  & 0    \\ 
  0  & 0  & R(\theta) \\  \end{psmallmatrix}, \begin{psmallmatrix}
 -1 & 0 & 0    \\
  0  & \pm 1 & 0   \\
  0  &  0 & R(\theta)E_0 \\  \end{psmallmatrix} \right\rangle , 
\]
\[
\quad \text{or} \quad \left\langle \begin{psmallmatrix}
R(\theta) & 0 & 0 \\ 
0  & \pm 1 & 0     \\
0  &  0  &  1      \\  \end{psmallmatrix}, \begin{psmallmatrix}
R(\theta)E_0 & 0 & 0 \\
0  & \pm 1 & 0     \\
0  &  0  &  -1      \\  \end{psmallmatrix} \right\rangle ,
\]
for all the cases but $O^4_{20}$, $O^4_{24}$, $N^4_{40}$ and $N^4_{41}$. For these cases we know that 
\[
\langle A , B \rangle \subset \mathcal{N}_{\pi} \subset \left\langle \begin{psmallmatrix}
\pm 1 & 0 & 0    \\
  0  & \pm 1  & 0    \\ 
  0  & 0  & R(\theta) \\  \end{psmallmatrix}, \begin{psmallmatrix}
\pm 1 & 0 & 0    \\
  0  & \pm 1 & 0   \\
  0  &  0 & E_0 \\  \end{psmallmatrix} \right\rangle \quad \text{or} \quad  \left\langle \begin{psmallmatrix}
R(\theta) & 0 & 0 \\ 
0  & \pm 1 & 0     \\
0  &  0  &  \pm 1      \\  \end{psmallmatrix}, \begin{psmallmatrix}
E_0 & 0 & 0 \\
0  & \pm 1 & 0     \\
0  &  0  &  \pm 1      \\  \end{psmallmatrix} \right\rangle , 
\]
\[
\quad \text{or} \quad \left\langle  \begin{psmallmatrix}
\left\langle - \Id, \; \begin{psmallmatrix} 0 & 1 \\
1 & 0 \end{psmallmatrix} \right\rangle  & 0    \\
 0  & R(\theta) \\  \end{psmallmatrix}, \begin{psmallmatrix}
\left\langle - \Id, \; \begin{psmallmatrix} 0 & 1 \\
1 & 0 \end{psmallmatrix} \right\rangle & 0    \\
0 & E_0 \\  \end{psmallmatrix} \right\rangle . 
\] 
\end{proposition}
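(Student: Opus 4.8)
The plan is to combine the block reduction of Lemma~\ref{separate} with the fact that a normalizing affine transformation must preserve the translation lattice of $\pi$, and then run a short case analysis over the manifolds of Tables~\ref{table:5}--\ref{table:9}. Since $\tau$ of an element normalizing $\pi$ normalizes $H_\pi$, we have $\mathcal{N}_\pi\subseteq\mathrm{N}_{\G(4,\R)}(H_\pi)$, so by Lemma~\ref{separate} every $X\in\mathcal{N}_\pi$ has the form $X=\begin{psmallmatrix}X_r & 0\\ 0 & X_c\end{psmallmatrix}$, with $X_r\in\G(2,\R)$ acting on the span $V_1\oplus V_2$ of the two $\R$-type lines and $X_c\in\G(2,\R)$ acting on the $\C$-type plane $V_3$; for $N^4_{29}$ the rotation block of $A$ occupies the first two coordinates, which is exactly why the third displayed family looks different. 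Writing $\Lambda=\pi\cap\R^4$ for the translation subgroup, any $(X,w)\in\mathrm{N}_{\A(4)}(\pi)$ satisfies $X\Lambda=\Lambda$, so $X$ is integral in a $\Z$-basis of $\Lambda$; for the non-exceptional manifolds this basis is the standard one on $V_1\oplus V_2$ and a hexagonal (for $k=3,6$) or square (for $k=4$) lattice on $V_3$.

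Next I would pin down the complex block. Conjugation by $X$ sends the rotation $R(\theta_0)=A|_{V_3}$ to an element of $H_\pi|_{V_3}$ of the same trace and determinant; the only such rotations are $R(\theta_0)$ and $R(-\theta_0)$ (and a reflection is excluded by the determinant), so $X_c$ normalizes $\langle R(\theta_0)\rangle$ in $\G(2,\R)$, hence lies in $\{aR(\phi):a\in\R^\times\}\cup\{aR(\phi)E_0:a\in\R^\times\}$. Intersecting with the integrality constraint restricts the rotational part $R(\phi)$ to a symmetry of the $V_3$-lattice, i.e.\ to $C_6$ or $C_4$. This is precisely why $R(\theta)$, with $\theta$ ranging over the relevant cyclic group, appears as the complex block of the generators, the only remaining freedom being the reflection $E_0$.

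For the real block, since $V_1\not\cong V_2$ as $H_\pi$-modules, $X_r$ either fixes each line or interchanges them, and integrality forces every nonzero entry to be $\pm1$; thus $X_r\in\langle\mathrm{diag}(\pm1,\pm1)\rangle$, possibly enlarged by a coordinate swap. It then remains to impose the genuine normalization condition: with $\alpha=(A,a)$ and $\beta=(B,b)$ the non-translation generators from the relevant table, one requires a translation $w$ with $(X,w)\alpha(X,w)^{-1},(X,w)\beta(X,w)^{-1}\in\pi$. Expanding these equations couples the signs on $V_1\oplus V_2$, the presence of $E_0$ on $V_3$, and the rotation $R(\phi)$; solving them manifold by manifold (reusing the lattice-preservation computation of \cite{karla}) leaves exactly the three groups in the statement, the trichotomy recording which real line carries the free $\pm1$ and whether the rotation block sits in coordinates $1,2$ or $3,4$. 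For the four exceptional manifolds the isotypic splitting is not aligned with Lambert's coordinates --- $N^4_{41}$ has a coordinate swap in the real block of $A$, $N^4_{40}$ has $B$ acting on $V_3$ by a reflection so that $H_\pi|_{V_3}$ is dihedral, and $O^4_{20},O^4_{24}$ carry non-standard lattices (cf.\ the remarks preceding Tables~\ref{table:5} and \ref{table:6}) --- so the translation bookkeeping does not close to a single group; for these I would record only the lower bound $\langle A,B\rangle\subseteq\mathcal{N}_\pi$, valid because $A=\tau(\alpha)$ and $B=\tau(\beta)$ already normalize $\pi$, and the upper bound furnished by the first two steps.

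The genuinely laborious part is the last one: for each of the fifteen manifolds of Tables~\ref{table:5}--\ref{table:9} one must test every candidate $X$ --- a sign pattern on $V_1\oplus V_2$ together with a choice of $R(\phi)$ and of $E_0$ on $V_3$ --- against the existence of a translation $w$ making $(X,w)$ normalize $\pi$, and most a priori candidates are eliminated precisely by the incompatibility of the fractional translation vectors of $\alpha$ and $\beta$. Everything else is structural and uniform across the cases.
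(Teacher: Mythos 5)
Your proposal is correct and follows essentially the same route as the paper: the block reduction of Lemma~\ref{separate}, order/trace invariance under conjugation to confine the complex block to $\langle R(\theta)\rangle\cup\langle R(\theta)\rangle E_0$, lattice preservation to force signed (permutation) matrices on the real block, and a final manifold-by-manifold search for translations $w$ making $(X,w)$ normalize $\pi$, with the lower bound $\langle A,B\rangle\subseteq\mathcal{N}_\pi$ coming from $\alpha,\beta\in\mathrm{N}_{\A(4)}(\pi)$. The only difference is that you defer the explicit translation computations (which the paper carries out, e.g.\ the element $\bigl(\mathrm{diag}(R(\theta),-1,1),(\tfrac14,-\tfrac14,-\tfrac14,0)\bigr)$ for $N^4_{29}$), but you describe that step accurately, so there is no gap in the method.
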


\begin{proof}
Notice that in general we have $ \langle \alpha , \beta \rangle \subset \mathrm{N}_{\A(4)}(\pi) $; then,  $\langle A , B \rangle \subset \mathcal{N}_{\pi}$. \\ First, we prove that $\mathcal{N}_{\pi}$ is contained in 
$
\left\langle \begin{psmallmatrix}
\pm 1 & 0 & 0    \\
  0  & \pm 1  & 0    \\ 
  0  & 0  & R(\theta) \\  \end{psmallmatrix}, \begin{psmallmatrix}
\pm 1 & 0 & 0    \\
  0  & \pm 1 & 0   \\
  0  &  0 & E_0 \\  \end{psmallmatrix} \right\rangle$, or \\
  $  \left\langle \begin{psmallmatrix}
R(\theta) & 0 & 0 \\ 
0  & \pm 1 & 0     \\
0  &  0  &  \pm 1      \\  \end{psmallmatrix}, \begin{psmallmatrix}
E_0 & 0 & 0 \\
0  & \pm 1 & 0     \\
0  &  0  &  \pm 1      \\  \end{psmallmatrix} \right\rangle $, or $ \left\langle  \begin{psmallmatrix}
\left\langle - \Id, \; \begin{psmallmatrix} 0 & 1 \\
1 & 0 \end{psmallmatrix} \right\rangle  & 0    \\
 0  & R(\theta) \\  \end{psmallmatrix}, \begin{psmallmatrix}
\left\langle - \Id, \; \begin{psmallmatrix} 0 & 1 \\
1 & 0 \end{psmallmatrix} \right\rangle & 0    \\
0 & E_0 \\  \end{psmallmatrix} \right\rangle . 
$ 

Since the order is invariant under conjugation, we have that $XAX^{-1}= A$ or $A^r$, where $(k,r)=1$, and $XBX^{-1} \in \langle A \rangle \cdot B$ is another reflection. For $N^4_{27}$, $N^4_{28}$, $N^4_{29}$, and  $N^4_{42}$, we have the only possibility  $XBX^{-1}=B$.

By lemma \ref{separate}, $X$ has the form  
\[
X= \begin{psmallmatrix}
X_1 & 0    \\
0  &  X_2 \\  \end{psmallmatrix}, \quad \text{with} \quad X_1 , X_2 \in \G(2, \R).
\]
In most of the cases the rotation of the generator $A$ is in the second entry of the diagonal, so we will consider w.l.o.g. that the matrix $X_2$ satisfies
$$ X_2R(\theta)X_2^{-1}= \begin{cases}
R(\theta),    \\
R(\theta)^r. 
\end{cases}
$$
For the case of $N^4_{42}$, the order of $R(\theta)$ is $k/2$ and $r$ satisfies $(k/2,r)=1$. 

If $ X_2R(\theta)X_2^{-1}= R(\theta)$, $X_2$ must be a rotation preserving the corresponding lattice of $\pi$; then 
\[
X_2 \in \langle R(\theta ) \rangle \quad \text{and if} \quad k=3 \quad X_2 \in \langle R(\pi/3 ) \rangle .
\]    

If $ X_2R(\theta)X_2^{-1}= R(\theta)^r$, $X_2$ must be a reflection preserving the corresponding lattice of $\pi$; then 
\[
X_2 \in \langle R(\theta ) \rangle \cdot E_0 \quad \text{and if} \quad k=3 \quad X_2 \in \langle R(\pi/3 ) \rangle \cdot E_0 .
\]    

$X_1$ must preserve $E_0$; therefore it is a diagonal matrix. When we have the standard lattice in the first two entries $e_1, e_2 \in L_{\pi}$, then 
\[
X_1 = \begin{psmallmatrix}
\pm 1 & 0     \\
 0  &  \pm 1      \\  \end{psmallmatrix} .
\]  
We have some cases with just one standard entry $e_1 \in L_{\pi}$; in this case, we have to compute the type of matrices that preserve the lattice $L_{\pi}$ (see \cite{karla}), where we get that the first entry of this type of matrices is an integer. We conclude that $X_1$ is as before. For the case of $N^4_{41}$, we have to preserve $\begin{psmallmatrix}
0 & 1     \\
1  &  0      \\  \end{psmallmatrix}$; then,
\[
X_1 \in \left\langle - \Id, \; \begin{psmallmatrix} 0 & 1 \\
1 & 0 \end{psmallmatrix} \right\rangle .
\]

Thus 
\[
\mathcal{N}_{\pi} \subset \left\langle \begin{psmallmatrix}
\pm 1 & 0 & 0    \\
  0  & \pm 1  & 0    \\ 
  0  & 0  & R(\theta) \\  \end{psmallmatrix}, \begin{psmallmatrix}
\pm 1 & 0 & 0    \\
  0  & \pm 1 & 0   \\
  0  &  0 & E_0 \\  \end{psmallmatrix} \right\rangle \quad \text{or} \quad  \left\langle  \begin{psmallmatrix}
\left\langle - \Id, \; \begin{psmallmatrix} 0 & 1 \\
1 & 0 \end{psmallmatrix} \right\rangle  & 0    \\
 0  & R(\theta) \\  \end{psmallmatrix}, \begin{psmallmatrix}
\left\langle - \Id, \; \begin{psmallmatrix} 0 & 1 \\
1 & 0 \end{psmallmatrix} \right\rangle & 0    \\
0 & E_0 \\  \end{psmallmatrix} \right\rangle  . 
\]

From now on, we do not consider anymore the cases of $O^4_{20}$, $O^4_{24}$, $N^4_{40}$ and $N^4_{41}$. Finally, we prove 
\[
\mathcal{N}_{\pi} = \left\langle \begin{psmallmatrix}
\pm 1 & 0 & 0    \\
  0  & 1  & 0    \\ 
  0  & 0  & R(\theta) \\  \end{psmallmatrix}, \begin{psmallmatrix}
\pm 1 & 0 & 0    \\
  0  & -1 & 0   \\
  0  &  0 & R(\theta)E_0 \\  \end{psmallmatrix} \right\rangle \quad \text{or} \quad  \left\langle \begin{psmallmatrix}
  1 & 0 & 0    \\
  0  & \pm 1  & 0    \\ 
  0  & 0  & R(\theta) \\  \end{psmallmatrix}, \begin{psmallmatrix}
 -1 & 0 & 0    \\
  0  & \pm 1 & 0   \\
  0  &  0 & R(\theta)E_0 \\  \end{psmallmatrix} \right\rangle , 
\]
\[
\quad \text{or} \quad \left\langle \begin{psmallmatrix}
R(\theta) & 0 & 0 \\ 
0  & \pm 1 & 0     \\
0  &  0  &  1      \\  \end{psmallmatrix}, \begin{psmallmatrix}
R(\theta)E_0 & 0 & 0 \\
0  & \pm 1 & 0     \\
0  &  0  &  -1      \\  \end{psmallmatrix} \right\rangle .
\]

We have to analyze case by case.
In most of the cases, the translation part of the generators looks like 
    \[
\alpha = \left(A, v=(v_1, \frac{1}{k}, v_3, v_4) \right) \quad \text{and} \quad \beta = \left(B, u=( \frac{1}{2}, \frac{1}{k}, 0, 0) \; \mathrm{or} \; u=(u_1, u_2, u_3, u_4) \right), 
\]
where  $v_i \in \{ 0, \frac{1}{2}, \frac{1}{4} \}$ and $u_i \in \{ 0, \frac{1}{2} \} $.
To preserve the lattice of $\alpha$ we need $X_1 = \begin{psmallmatrix}
\pm 1 & 0     \\
 0  &  1      \\  \end{psmallmatrix} $. We know that $\alpha =(A,v) \in \mathrm{N}_{\A(4)}(\pi)$. When considering $P= \begin{psmallmatrix}
-1 & 0 &  0  \\
 0 & 1 &  0  \\
 0 & 0 & R(\theta )    \end{psmallmatrix}$, we use the same translation $v$ to see that $\tau =(P,v)$ is also in $\mathrm{N}_{\A(4)}(\pi)$. This also works for the case when the translation part of the generators is as before but the value $\frac{1}{k}$ of $v$ and $u$ is in the first entry; we just consider $X_1 = \begin{psmallmatrix}
1 & 0     \\
 0  & \pm 1      \\  \end{psmallmatrix} $.  
For the case of $N^4_{29}$, we get that $ \left( \begin{psmallmatrix}
R(\theta ) & 0 & 0 \\
0 &  -1 & 0  \\
0 &  0  &  1   \end{psmallmatrix}, ( \frac{1}{4}, -\frac{1}{4}, -\frac{1}{4}, 0) \right) \in \mathrm{N}_{\A(4)}(\pi) $. 
 
To switch to the generator $A^r$ we need $X_1 = \begin{psmallmatrix}
\pm 1 & 0     \\
 0  &  -1      \\  \end{psmallmatrix} $ or $X_1 = \begin{psmallmatrix}
- 1 & 0     \\
 0  &  \pm 1      \\  \end{psmallmatrix} $ if the value $\frac{1}{k}$ of $v$ and $u$ is in the second or first entry. We have the following cases depending on the generator $\beta =(B,u)$. 
 \begin{itemize}
     \item When the matrix $B$ has a reflection of the form $R(\theta )^{s_1}E_0$, for some integer $s_1$. Then we use directly that $\beta \in \mathrm{N}_{\A(4)}(\pi)$, and for $X_1= -\Id$ we use that $\alpha^{s_2} \tau \beta \in \mathrm{N}_{\A(4)}(\pi)$ for any integer  $s_2$.
     \item When $B$ has no reflections, we  search for a suitable translation $x \in \R^4 $ such that $\left( \begin{psmallmatrix}
1 & 0  & 0  \\
0  &  -1  & 0 \\ 
0  &   0  & R(\theta )E_0 \end{psmallmatrix}, x \right) \in \mathrm{N}_{\A(4)}(\pi) $.  \\ 
For $N^4_{27}$, $N^4_{28}$ and $N^4_{42}$ we get that $\left( \begin{psmallmatrix}
1 & 0  & 0  \\
0  &  -1  & 0 \\ 
0  &   0  & R(\theta )E_0 \end{psmallmatrix}, 0 \right) \in \mathrm{N}_{\A(4)}(\pi) $. \\
For $N^4_{29}$ we obtain that 
$ \left( \begin{psmallmatrix}
R(\theta )E_0 & 0 & 0 \\
0 &  1 & 0  \\
0 &  0  &  -1    \end{psmallmatrix}, ( \frac{1}{4}, \frac{1}{4}, -\frac{1}{4}, 0) \right) \in \mathrm{N}_{\A(4)}(\pi)$. \qedhere
\end{itemize}
\end{proof}

%To study the moduli space of flat metrics for these cases we just need that at least one option of $X_1$ for $X$ is in $\mathcal{N}_{\pi}$. That is why for the cases of $O^4_{20}$, $O^4_{24}$, $N^4_{40}$ and $N^4_{41}$, we did not put an equality for the matrix part of the normalizer, where their translations are more complicated and we get harder computations. 
          
\section{Subgroups of SL(2,$\mathbb{Z}$)} \label{sec:subgroups}

In this section we study the topology of the remaining cases, where we have double quotients of the form
\begin{equation} \label{doublequot}
\ort(2) \backslash \G(2, \R) / \Gamma , \quad \text{with} \; \Gamma \leq \G(2,\mathbb{Z}).
\end{equation}

The Teichm\"uller space of flat metrics % $\mathcal{M}_{flat}(M)$ 
on the $2$-torus is well understood, see \cite{mapclassgr}; in the cited reference it is shown the existence of a homeomorphism 
\begin{equation} \label{homeohyp}
\ort(2) \backslash \G(2, \R) \cong \R^+ \times \mathbb{H}^2 .
\end{equation}
To study the double quotient \eqref{doublequot}, we analyze the action of $\Gamma$ on the hyperbolic plane. First of all observe that we can just take the matrices with positive determinant, denoted by $\Gamma^+$. The action of matrices in $\mathrm{SL}(2, \mathbb{Z})$ on $\mathbb{H}^2$ is via M\"obius transformations; and using information such as the fundamental domain of $\mathrm{SL}(2, \mathbb{Z})$ on $\mathbb{H}^2$ and the index of $\Gamma^+$ in $\mathrm{SL}(2, \mathbb{Z})$, we can compute the fundamental domain of $\Gamma^+$. See \cite{karla}.

Here are some results of double quotients studied in \cite{karla}.

\begin{itemize}
  \item$\mathrm{O}(2)\backslash \G(2,\mathbb{R}) /\G(2,\mathbb{Z}) \cong \R^+ \times (\mathbb{H}^2 /\mathrm{SL}(2,\mathbb{Z})) 
 \cong \R^+ \times (\mathbb{S}^2 \setminus \{*\}) \cong \R^3 $ .
 \item $ \ort(2) \backslash \G(2, \R) / \Gamma(2)\cdot \left\langle Y \right\rangle \cong \R^+ \times \mathbb{H}^2 / \Gamma(2)\cdot \left\langle Y \right\rangle^+ \cong \R^+ \times \mathbb{S}^1\times \R$, where $Y=\begin{psmallmatrix}
 0 & -1  \\
 1  & 0
  \end{psmallmatrix}$. 
 \item $ \ort(2) \backslash \G(2, \R) / \Gamma(2) \cong \R^+ \times \mathbb{H}^2/ \Gamma(2)^+ \cong \R^+ \times 3\text{-punctured sphere} $.  
\end{itemize}

\begin{figure*}[h]
\begin{multicols}{3}
    \includegraphics[width=0.6\linewidth]{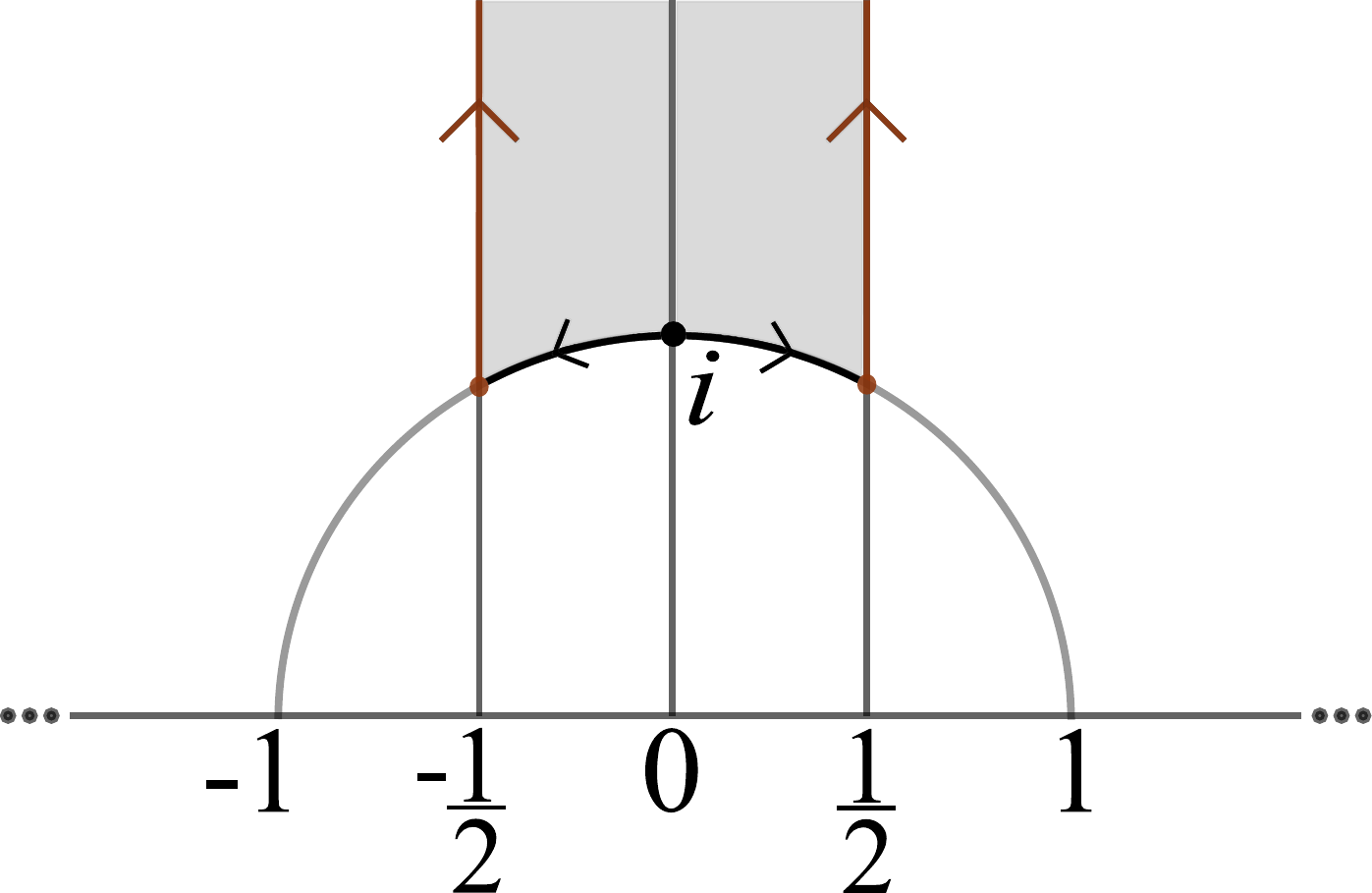}\par 
    \includegraphics[width=0.6\linewidth]{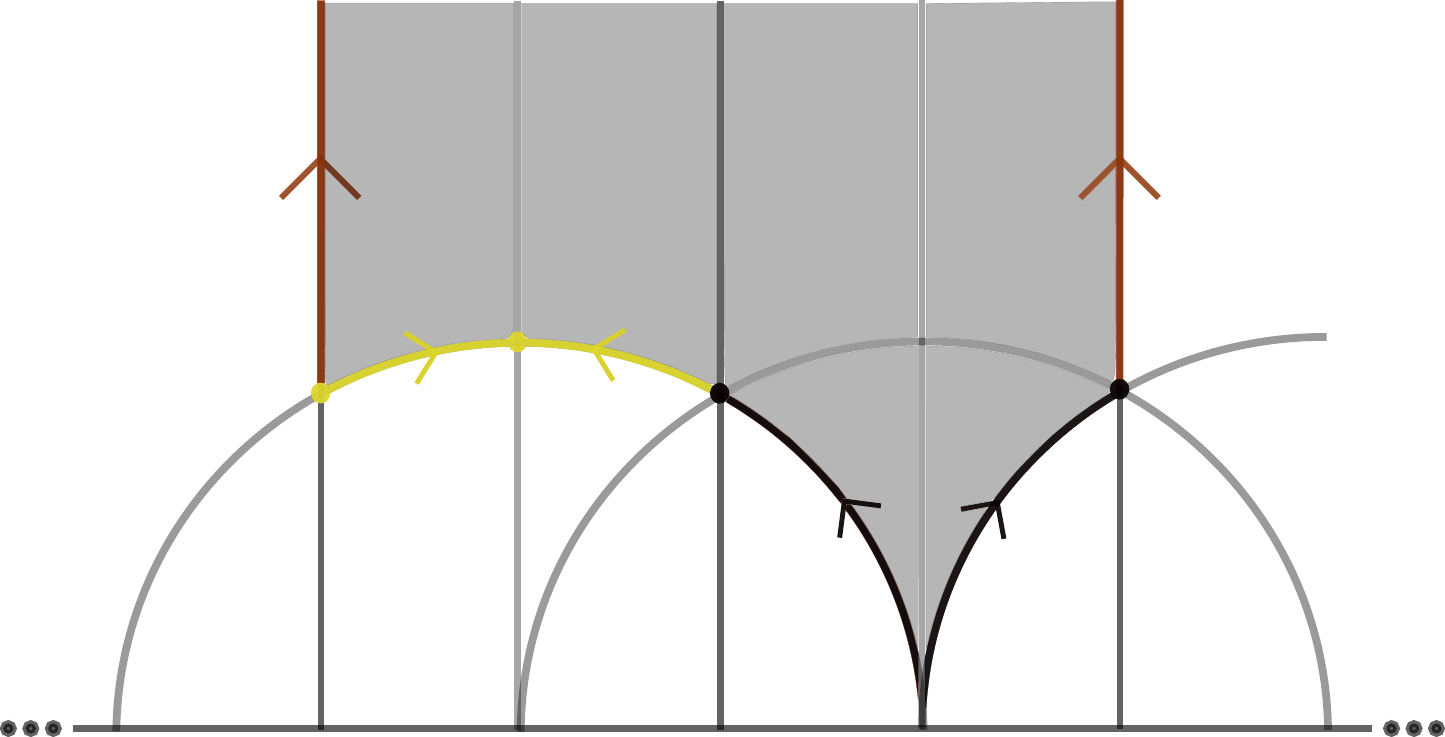}\par 
    \includegraphics[width=0.6\linewidth]{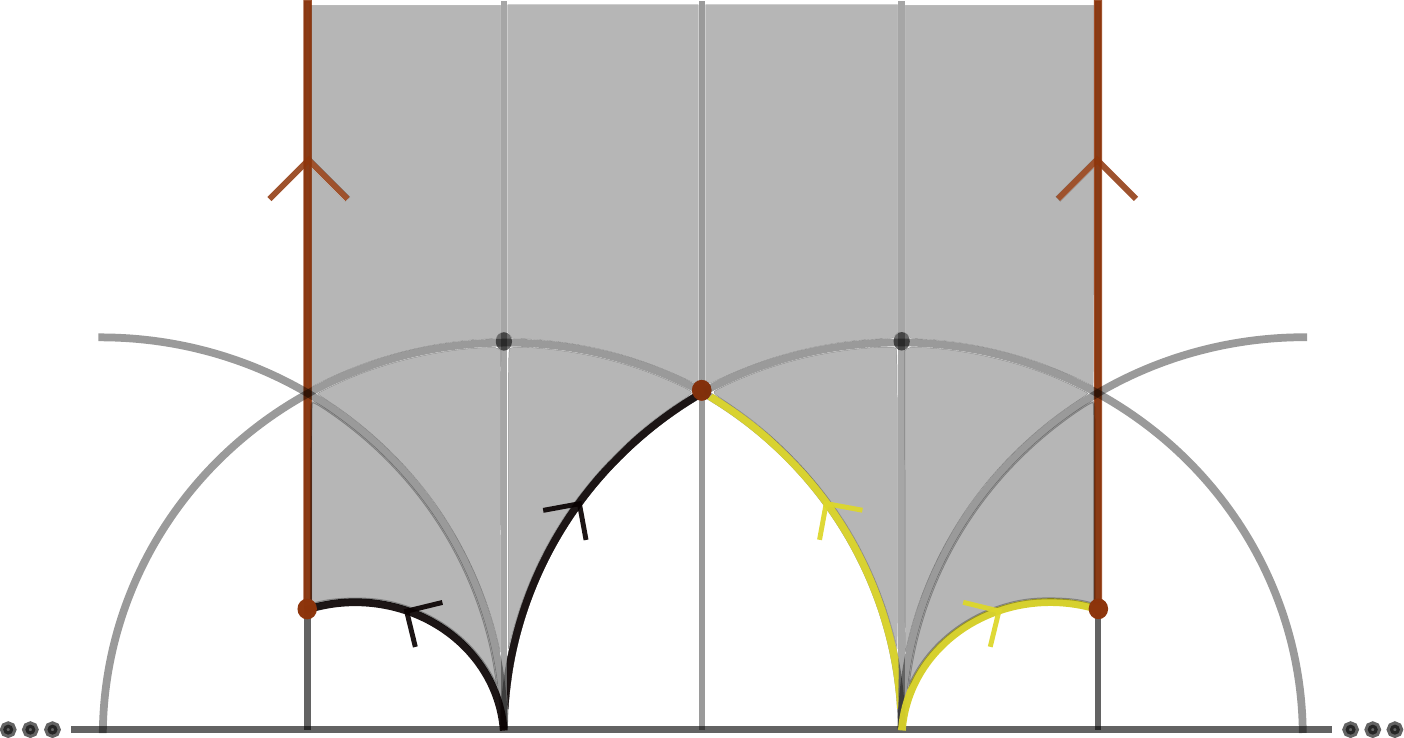}\par
    \end{multicols}
\caption{Fundamental domains of $\mathrm{SL}(2,\mathbb{Z})$, $\Gamma(2)\cdot \left\langle Y \right\rangle^+$ and $\Gamma(2)^+$ on $\mathbb{H}^2$, respectively.}
\label{3funddomains}
\end{figure*}

We compute the fundamental domain for the subgroups $\Gamma_0(2)^{t+}$ and $\Gamma_0(4)^+$ of $\mathrm{SL}(2, \Z )$. %I don't need the details of this since I have a double covering ... \\

First, observe that $[\mathrm{SL}(2, \Z) : \Gamma_0(2)^{t+}] = 3$. This is computed with the same procedure as the case of $\Gamma_0(2)^{+}$ in \cite{karla}. The representatives we choose in $\mathrm{SL}(2, \Z)$ are \\
$\Id$,  $\left( \begin{array}{cc}
0 & -1 \\ 1 & 0 \end{array} \right)= S$,  $\left( \begin{array}{cc}
1 & 1 \\ 0 & 1 \end{array} \right)=T$, where $S$ and $T$ are the generators of $\mathrm{SL}(2, \Z)$. Thus, we get the fundamental domain of $\Gamma_0(2)^{t+}$ on $\mathbb{H}^2$, shown in figure \ref{fund2_0t}. The borders are identified by $T^2$, $\left( \begin{array}{cc}
1 & -2 \\ 1 & -1 \end{array} \right)$ and $\left( \begin{array}{cc}
-1 & 0 \\ -1 & -1 \end{array} \right)$ $\in \Gamma_0(2)^{t+}$. We conclude that $\mathbb{H}^2 / \Gamma_0(2)^{t+} \cong \mathbb{S}^1\times \R$.   

\begin{figure}[h]
\begin{center}
\scalebox{0.3}[0.3]{\includegraphics{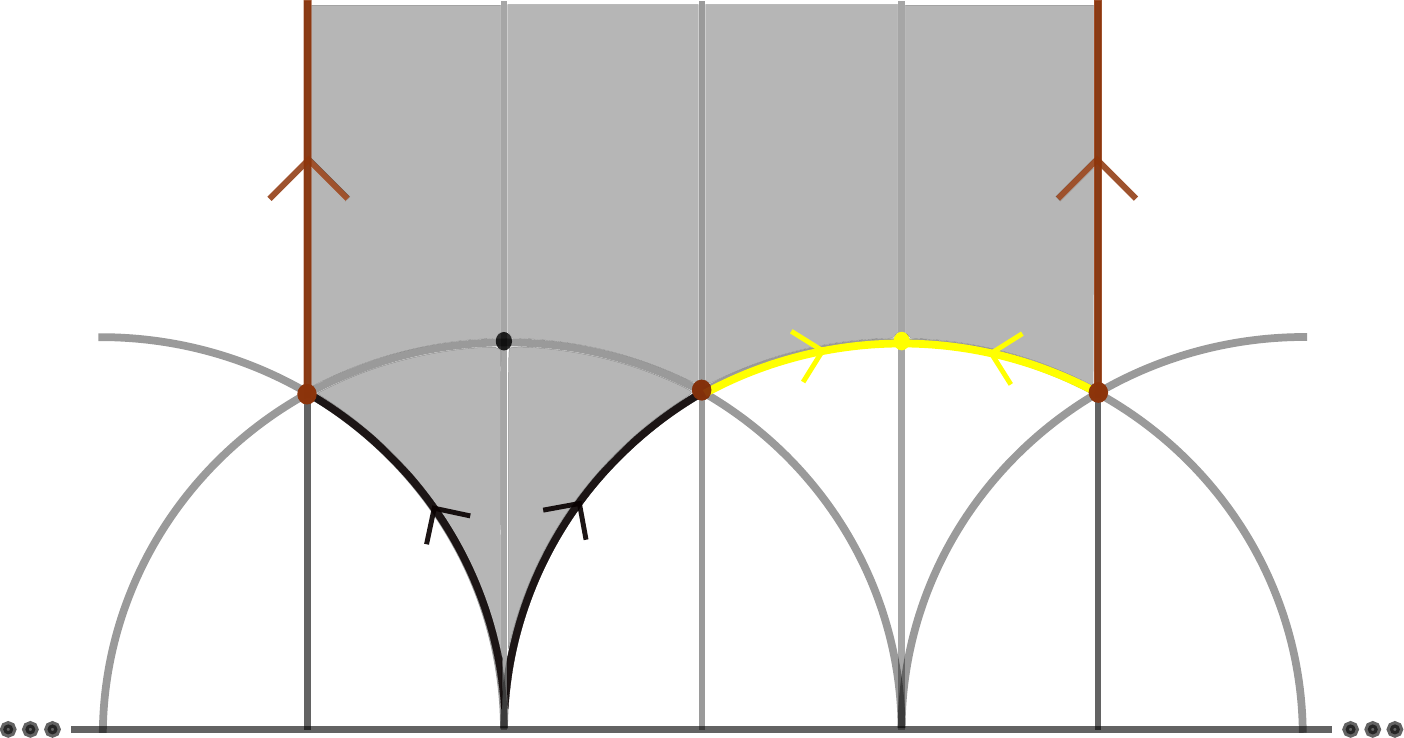}}
\caption{The fundamental domain of $\Gamma_0(2)^{t+}$ on $\mathbb{H}^2$.}
\label{fund2_0t}
\end{center}
\end{figure}

Now, we have that $[\mathrm{SL}(2, \Z) : \Gamma_0(4)^{+}] = 12$, because
\begin{equation*}
[\mathrm{SL}(2, \Z) : \Gamma_0(4)^{+}] = [\mathrm{SL}(2, \Z) : \Gamma(2)^{+}] [ \Gamma(2)^{+} : \Gamma_0(4)^{+}] = 6 \cdot 2.
\end{equation*}
To choose the representatives in $\mathrm{SL}(2, \Z)$ for $\Gamma_0(4)^{+}$, we use the representatives already given in \cite{karla} for $\Gamma(2)^+$, denoted here as $\gamma_i$ for $i\in \{1, \dots ,6\}$. We have that $\Gamma(2)^+= \Gamma_0(4)^{+} \alpha_1 \sqcup \Gamma_0(4)^{+} \alpha_2 $, where $\alpha_1= \Id$ and $\alpha_2 =\left( \begin{array}{cc}
1 & 0 \\ 2 & 1 \end{array} \right)$, then the representatives we choose are $\{ \gamma_1, \dots ,\gamma_6, \alpha_2\gamma_1, \dots, \alpha_2 \gamma_6 \}$. Thus, we get the fundamental domain of $\Gamma_0(4)^{+}$ on $\mathbb{H}^2$, shown in figure \ref{fund4_0}. The borders are identified by $T^{-2}$, $\left( \begin{array}{cc}
1 & 0 \\ 4 & 1 \end{array} \right)$, $\left( \begin{array}{cc}
-3 & 2 \\ -8 & 5 \end{array} \right)$, $\left( \begin{array}{cc}
3 & -2 \\ 8 & -5 \end{array} \right)$ and $\left( \begin{array}{cc}
1 & -2 \\ 4 & -7 \end{array} \right)$ $\in \Gamma_0(4)^{+}$. We conclude that $\mathbb{H}^2 / \Gamma_0(4)^{+} \cong 4$-punctured sphere.

\begin{figure}[h]
\begin{center}
\scalebox{0.4}[0.4]{\includegraphics{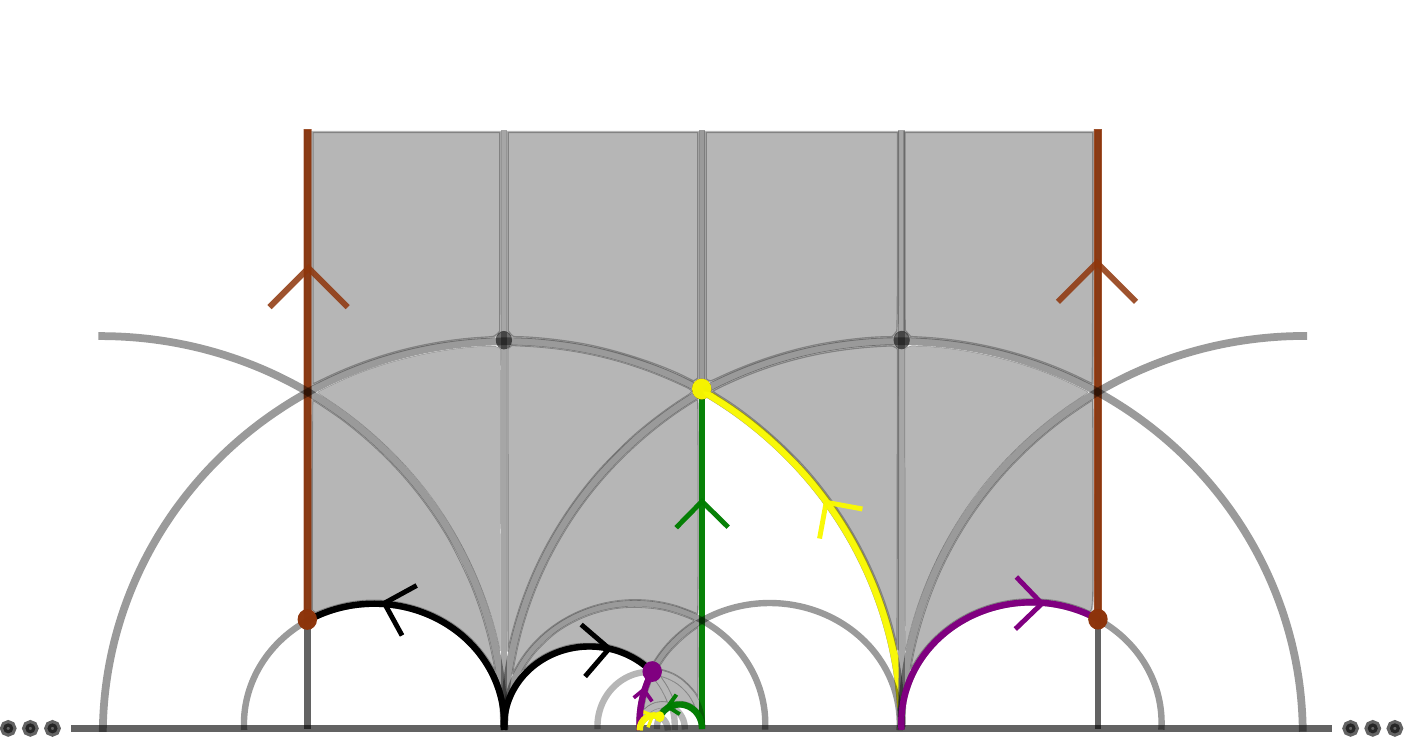}}
\caption{The fundamental domain of $\Gamma_0(4)^{+}$ on $\mathbb{H}^2$.}
\label{fund4_0}
\end{center}
\end{figure}

\section{The moduli space of flat metrics}\label{sec:moduli}

From the previous sections, we have all the information of $\mathcal{T}_{flat}(M)$ and  $\mathcal{N}_{\pi}$ for the $4$-dimensional closed flat manifolds with two or more generators in their holonomy. Then we reduce the quotient in order to prove the theorem.   

\begin{proof}[Proof of Theorem \ref{modulimoregen}]
We use the description $\mathcal{M}_{flat}(M) = \mathcal{T}_{flat}(M)/ \mathcal{N}_{\pi}$ from proposition \ref{moduli} and the information of the Teichm\"uller space from Proposition \ref{teich}.  

For the cases of four different $1$-dimensional $\R$ type isotypic components we have 
\[
\mathcal{M}_{flat}(M) = (\R^+)^4/ \mathcal{N}_{\pi}.
\]
We use Propositions \ref{case 1} and \ref{case 2}, where $\mathcal{N}_{\pi}$ is computed. We have three different types of actions by $\mathcal{N}_{\pi}$: 
\begin{itemize}
\item When $\mathcal{N}_{\pi}= \left\{ \mathrm{diag}(\pm 1, \pm 1, \pm 1, \pm 1)   \right\}$, we get $\mathcal{M}_{flat}(M) = (\R^+)^4$.
\item When we have one permutation between two entries, for example, $\mathcal{N}_{\pi}= \left\{ \mathrm{diag}(\pm 1, \pm 1, \pm 1, \pm 1), \begin{psmallmatrix}
0 & \pm 1 & 0 & 0  \\
\pm 1 & 0 & 0 & 0 \\
0 & 0 & \pm 1 & 0 \\ 
0 & 0 & 0     &  \pm 1     \end{psmallmatrix}  \right\}$, we get $ \mathcal{M}_{flat}(M) \cong (\R^{+})^3 \times [0, \infty  ) $.
\item When we have all the permutations of the three entries of $\R^3$, which is the case of $\mathbf{O^4_{14}}$ with $\mathcal{N}_{\pi}= \left\{ \begin{psmallmatrix}
\sigma &  0  \\
 0    &  \pm 1      \end{psmallmatrix} \mid \sigma \in \left\{  \mathrm{diag}( \pm 1, \pm 1, \pm 1) \rtimes \mathrm{S}_3  \right\} \right\}$, we get
 \begin{align*}
    \mathcal{M}_{flat}(M) & \cong  \R^+ \times (\R^+)^3 / S_3  \\ & \cong  \R^+ \times \{ (x,y,z) \in (0, \infty )^3 \mid x \leq y \leq z \} \\
    & \cong (\R^{+})^2 \times [0, \infty  )^2,
 \end{align*} 
where the last homeomorphism is given by $(x,y,z) \mapsto (x, y-x, z-y )$.
\end{itemize}

For the cases of two different $1$-dimensional $\R$ type isotypic components we have $\mathcal{M}_{flat}(M) = (\R^+)^2/ \mathcal{N}_{\pi}$, and from Proposition \ref{case 3} we get that the action of $\mathcal{N}_{\pi}$ is always reduced to the action of $\left\{ \mathrm{diag}(\pm 1, \pm 1 ) \right\}$. Thus we get $\mathcal{M}_{flat}(M) = (\R^+)^2$.

For the cases of two different $1$-dimensional $\R$ type isotypic components and one $2$-dimensional $\R$ type isotypic component we have 
\[
\mathcal{M}_{flat}(M) = \left( \frac{\G(2, \R)}{\ort(2)} \times (\R^+)^2 \right) \Big/ \mathcal{N}_{\pi}.
\]
We use Proposition \ref{case 4} to substitute the corresponding computation for $\mathcal{N}_{\pi}$. In most of these cases, we can just split the action of $\mathcal{N}_{\pi}$ into each of the factors. For $N^4_{3}$, $N^4_{5}$, $N^4_{6}$ and $N^4_{7}$, we can not split the action, but we can do the following. The double quotient looks 
\[
\left( \frac{\G(2, \R)}{\ort(2)} \times (\R^+)^2 \right) \Big/ \left\langle \begin{psmallmatrix}
\Gamma_1 & 0 & 0    \\ 
0  &  \pm 1 & 0   \\
0  &   0    & \pm 1  \end{psmallmatrix}, \begin{psmallmatrix}
\Gamma_2 & 0 & 0   \\ 
0  &  0  &  1  \\
0  &  1  &  0   \end{psmallmatrix} \right\rangle,
\]
where $\Gamma_1$ is a group and $\Gamma_2$ is not. For any $B \in \Gamma_2$ there exists $A \in \Gamma_1$ and a fixed $B_0 \in \Gamma_2$ such that $B=AB_0$. Then the analysis of the action of $\Gamma_2$ is reduced to understand the action of $B_0$. The double quotient is reduced as follows: 
\begin{multline*}
    \left( \frac{\G(2, \R)}{\ort(2)} \times (\R^+)^2 \right) \Big/ \left\langle \begin{psmallmatrix}
\Gamma_1 & 0 & 0    \\ 
0  &  \pm 1 & 0   \\
0  &   0    & \pm 1  \end{psmallmatrix}, \begin{psmallmatrix}
\Gamma_2 & 0 & 0   \\ 
0  &  0  &  1  \\
0  &  1  &  0   \end{psmallmatrix} \right\rangle \cong  \\
\ort(2) \backslash \G(2, \R) / \Gamma_1 \times (\R^+)^2 \Big/ \left\langle \begin{psmallmatrix}
B_0 & 0 & 0    \\ 
0  &  0 & 1   \\
0  &  1 & 0  \end{psmallmatrix} \right\rangle. 
\end{multline*}
The action of $B_0$ can be analyzed by the map 
\[
B_0: \ort(2) \backslash \G(2, \R) / \Gamma_1 \rightarrow \ort(2) \backslash \G(2, \R) / \Gamma_1, \quad \text{where} \quad [[G]] \mapsto [[GB_0]]. 
\]

For the following, we consider the two cases separately:
\begin{itemize}
    \item For $N^4_{3}$, $N^4_{5}$ and $N^4_{6}$, where $B_0= \left( \begin{array}{cc}
0 & -1 \\ 1 & 0 \end{array} \right)$. Since $\Gamma_1 = \Gamma(2)$, we know that
\[
\ort(2) \backslash \G(2, \R) / \Gamma_1 \cong \R^+ \times \mathbb{H}^2/ \Gamma(2)^+ . 
\]
The map $B_0$ is a reflection with $i$ being the only fixed point on $\mathbb{H}^2$. We already know the fundamental domain of $\Gamma(2)^+$ and $\left\langle \Gamma_1 , \Gamma_2 \right\rangle = \Gamma(2)\cdot \left\langle Y \right\rangle^+$ on $\mathbb{H}^2$ thanks to \cite{karla} and shown in section \ref{sec:subgroups}, figure \ref{3funddomains}, then it is easier to see how $B_0$ acts. 
\begin{figure}[h]
\begin{center}
\scalebox{0.3}[0.3]{\includegraphics{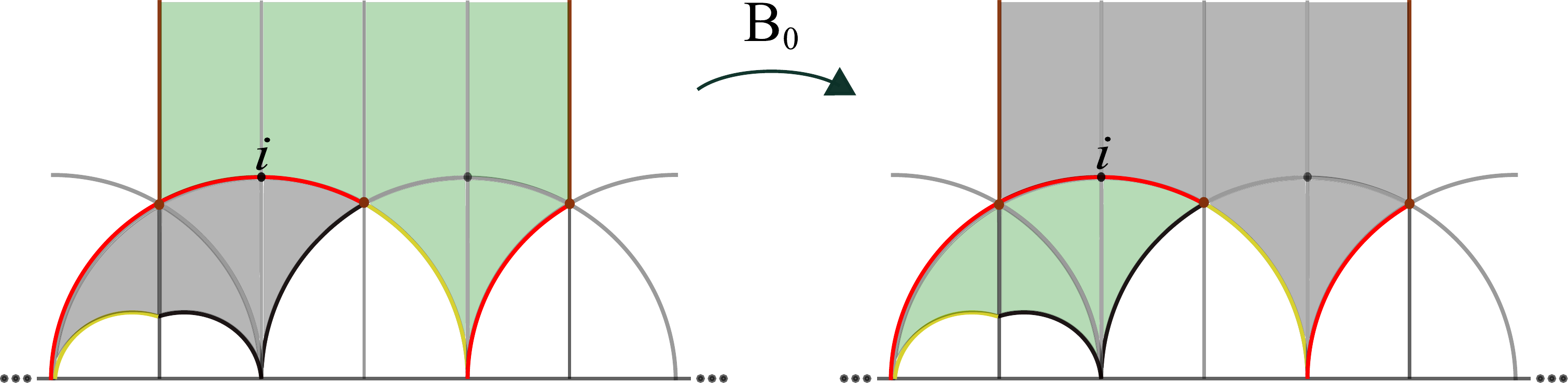}}
\caption{The action of $B_0$ on  $\mathbb{H}^2/ \Gamma(2)^+$.}
\label{fund2B_0}
\end{center}
\end{figure}
As shown in the figure \ref{fund2B_0}, the map $B_0$ sends the green domain to the green one and the grey to the grey one.
%\textcolor{red}{Los dos arcos de circunferencia que están del lado izquierdo en ambas figuras deberían ser un solo arco, no? mmm te refieres a los arcos chiquitos de hasta abajo?, ahí sí son dos arcos diferentes} 

Let $a, b \in \R^+$ such that $a<b$. Then we have that $([[G]],a,b)$ is  related only to $([[GB_0]],b,a)$. When we pick the  representatives $\{ (a,b) \in (\R^+)^2 \mid a \leq b \}$ and $a<b$, the two cylinders are glued together by the red path shown in figure \ref{fund2B_0}. In other words, let $f_{b-a}$ be the identity map restricted to the red path. We obtain the attached space $X_1 \cup _{f_{b-a}} X_2$ with $X_1=X_2 = \mathbb{H}^2 / \Gamma(2)\cdot \left\langle Y \right\rangle^+ $. When $a=b$, we have the attached space $X_1 \cup _{f_{0}} X_2 \cong X_1$, where $f_0$ is the identity in the whole space $X_1$.     

    \item For $N^4_{7}$, where $B_0= \left( \begin{array}{cc}
1 & 0 \\ 2 & 1 \end{array} \right)$. Since $\Gamma_1 = \Gamma_0(4)$, we know that
\[
\ort(2) \backslash \G(2, \R) / \Gamma_1 \cong \R^+ \times \mathbb{H}^2/ \Gamma_0(4)^+.
\]
The map $B_0$ is also a reflection and without fixed points in $\mathbb{H}^2$, obtaining a double covering. In this case, $\left\langle \Gamma_1 , \Gamma_2 \right\rangle = \Gamma(2)$, and as in the previous case, when $a,b \in \R^+$ and $a<b$ we have two copies of the space $\mathbb{H}^2 / \Gamma(2)^+$ glued together by a path. When $a=b$, we get only one copy of the $3$-punctured sphere. We get the path explicitly using the fundamental domains given in section \ref{sec:subgroups}, figures \ref{3funddomains} and \ref{fund4_0}.
\end{itemize}

For the case of one $1$-dimensional $\C$ type isotypic component and two $\R$ type isotypic components, we have that the double quotient is
\begin{align*}
\mathcal{M}_{flat}(M) & = \mathcal{T}_{flat}(M)/ \mathcal{N}_{\pi} \\
        & = \ort(1) \backslash \G(1, \R) \times \ort(1) \backslash \G(1, \R) \times \mathrm{U}(1) \backslash \G(1, \C)  \Big/ \mathcal{N}_{\pi}
\end{align*}
From Proposition \ref{normalizer} we know that 
\[
\left\langle \begin{psmallmatrix}
\Id & 0    \\ 
0  &  R(\theta ) \end{psmallmatrix}, \begin{psmallmatrix}
E_0 & 0   \\ 
0  & R(\theta )E_0 \end{psmallmatrix} \right\rangle \subset \mathcal{N}_{\pi} \subset \{ \begin{psmallmatrix}
\pm 1 & 0   \\
0  &  \pm 1    \end{psmallmatrix} \} \times \langle R(\theta), R(\theta)E_0   \rangle  .
\]
Even though $\mathcal{N}_{\pi}$ is not the same as the product of the right hand side, we can still split the double quotient because the action of $\{ \begin{psmallmatrix}
\pm 1 & 0   \\
0  &  \pm 1    \end{psmallmatrix} \}$ is the same as the action of $\ort(1) \times \ort(1)$. Then,
\begin{multline*}
\ort(1) \backslash \G(1, \R) \times \ort(1) \backslash \G(1, \R) \times \mathrm{U}(1) \backslash \G(1, \C)  \Big/ \mathcal{N}_{\pi} \cong \\
\ort(1) \backslash \G(1, \R) \times \ort(1) \backslash \G(1, \R) \Big/ \{ \pm 1 ,  \pm 1  \} \times  \mathrm{U}(1) \backslash \G(1, \C)  \Big/ \langle R(\theta), R(\theta)E_0  \rangle .
\end{multline*}
Each double quotient reduces to $\R^+$, obtaining $\mathcal{M}_{flat}(M) \cong  (\R^+)^3$.
\end{proof}

\begin{proof}[Proof of Corollary \ref{topmoregen}]

We use the homeomorphisms from section \ref{sec:subgroups}.
\end{proof}

\bibliographystyle{siam}

\end{document}